\documentclass[10pt]{amsart}
\usepackage{mystyle}
\usepackage{enumitem}
\usepackage[dvipsnames]{xcolor}
\usepackage{tikz-cd}
\usepackage[colorlinks=true,linkcolor=black,citecolor=blue,urlcolor=blue]{hyperref}

\newenvironment{altenumerate}
   {\begin{list}
      {\textup{(\theenumi)} }
      {\usecounter{enumi}
       \setlength{\labelwidth}{0pt}
       \setlength{\labelsep}{0pt}
       \setlength{\leftmargin}{0pt}
       \setlength{\itemsep}{\the\smallskipamount}
       \renewcommand{\theenumi}{\arabic{enumi}}
      }}
   {\end{list}}

\newcommand{\Bl}{{\rm Bl}}

\newcommand{\GSp}{{\rm GSp}}
\newcommand{\Spec}{{\rm Spec\,}}
\newcommand{\sgn}{{\rm sgn}}
\newcommand{\Proj}{{\rm Proj}}
\newcommand{\bfa}{{\mathbf a}}
\newcommand{\bfb}{{\mathbf b}}

\hypersetup{
  pdftitle={Equivariant regular resolutions of symplectic and split orthogonal local models over the reflex field},
  pdfauthor={Jie Yang, Ioannis Zachos, and Zhihao Zhao},
  pdfsubject={Regular resolutions of local models over the reflex field},
  pdfkeywords={local models, Shimura varieties, resolution of singularities, normal crossings, parahoric level}
}

\newcommand{\Addresses}{{%
  \bigskip
  \footnotesize
  \raggedright

  \textsc{Yau Mathematical Sciences Center, Tsinghua University,
  Beijing, 100084, China}\par\nopagebreak
  \textit{E-mail address:} \texttt{jie-yang@mail.tsinghua.edu.cn}\\

  \textsc{Department of Mathematics, Universit\"at M\"unster,
  M\"unster, 48149, Germany}\par\nopagebreak
  \textit{E-mail address:} \texttt{io.zachos@uni-muenster.de}\\

  \textsc{Department of Mathematics, University of Science and Technology
  Beijing, Beijing, 100083, China}\par\nopagebreak
  \textit{E-mail address:} \texttt{zhihaozhao@ustb.edu.cn}
}}
\title[Regular models of Shimura varieties]
{Regular integral models of Shimura varieties and a conjecture of Pappas}
\author{Jie Yang, Ioannis Zachos, and Zhihao Zhao}
\date{}

\begin{document}

\begin{abstract}
We construct regular integral models for a class of Shimura varieties
of symplectic and orthogonal type with maximal parahoric level at an
odd prime $p$. These models are defined over the ring of integers of
the reflex field and have special fiber with normal crossings and
irreducible components of multiplicity one or two. Our construction
uses explicit equivariant modifications of the corresponding canonical
local models for symplectic and split even orthogonal similitude groups,
with the minuscule cocharacters corresponding to maximal isotropic Grassmannians.
These modifications are obtained by successively blowing up Schubert
varieties in the special fiber. This proves the equivariant modification
conjecture of Pappas at maximal parahoric level in the
above cases.
\end{abstract}

\maketitle

\section{Introduction} 
{Shimura varieties are higher-dimensional generalizations of modular curves and play a central role in number theory. They are quasi-projective smooth varieties canonically defined over number fields called reflex fields. One active area of interest in the study of Shimura varieties is the construction of reasonable integral models over $p$-adic integer rings. Such models are useful in computing the Hasse--Weil zeta function of a Shimura variety in terms of automorphic L-functions, which is part of the Langlands program. For Shimura varieties of abelian type with (quasi-)parahoric level structure, integral models have been constructed in \cite{kisin2010integral,kisin2018integral,KPZ,daniels2026conjecture,MW26}. In \cite{pappas2024p}, Pappas and Rapoport formulated a conjectural theory of \dfn{canonical integral models} with (quasi-)parahoric level structure, which has been verified for Shimura varieties of abelian type; see \cite{daniels2026conjecture,pappas2024p,daniels2025canonical,MW26}. These canonical integral models with general parahoric level structure are often singular, and the construction of regular integral models of Shimura varieties is an important problem in arithmetic
geometry.} 
It also fits into the broader problem of resolution of
singularities in mixed characteristic.

For canonical integral models with parahoric level
structure, the singularities are controlled by the corresponding
canonical local models. These local models{, though originally studied in the context of Shimura varieties, can be constructed for any \dfn{local model triple} $(G,\CG,\cbra{\mu})$} 
where $G$ is a connected
reductive group over a $p$-adic field $F$, $\CG$ is a (quasi-)parahoric
group scheme over $\CO_F$ with generic fiber $G$, and $\{\mu\}$
is a $G(\overline F)$-conjugacy class of minuscule cocharacters
of $G_{\overline F}$. {For the theory of canonical local models, we refer to \cite{scholze2020berkeley, AGLR22, PZ}.} Via the local model diagram, equivariant
resolutions of these local models give rise to regular integral
models of the Shimura varieties. This motivates the following
conjecture of Pappas;
see \cite[\S 2.12]{Pappas} and \cite[\S 8.4]{PRS}.

\begin{conj}\label{conj:equivariant-modification}
Let $(G,\CG,\{\mu\})$ be a local model triple with reflex field $E$,
and let $\RM^{\loc}_{\CG,\mu}$ be its canonical local model over
$\CO_E$. There is a $\CG_{\CO_E}$-equivariant proper birational morphism
$\RM^{\mathrm{reg}}\longrightarrow\RM^\loc_{\CG,\mu}$, which is an isomorphism on the
generic fiber, such that $\RM^{\mathrm{reg}}$ is regular and its special
fiber is a (possibly non-reduced) divisor with simple normal crossings.
\end{conj}

We call a morphism $\RM^{\reg}\ra \RM^\loc_{\CG,\mu}$ satisfying the conditions of Conjecture \ref{conj:equivariant-modification} a \dfn{(regular) equivariant modification of $\RM_{\CG,\mu}^\loc$}. 
We prove Conjecture \ref{conj:equivariant-modification} at \dfn{maximal parahoric level} in the following
cases. Let $F_0$ be a $p$-adic field with ring of integers $\CO$,
uniformizer $\pi_0$, and residue field $k$ of characteristic $p>2$.
Assume that 
\begin{equation*}\tag{$\dagger$}\label{ass:groups}
\begin{aligned}
 &G=\GSp_{2g,F_0},\quad
   \mu=(1^{(g)},0^{(g)});\qquad\text{or}\\
 &{G=\GSO_{2g,F_0}{\text{ is split}},\quad \mu=\mu_+\coloneqq(1^{(g)},0^{(g)})
   \ \text{or}\ 
   \mu_-\coloneqq(1^{(g-1)},0,1,0^{(g-1)}) }.  
\end{aligned}
\end{equation*}
In both cases, the reflex field is $F_0$.

{
\begin{thm}\label{thm:main}
Fix $(G,\{\mu\})$ as in \eqref{ass:groups}.
For any maximal parahoric group scheme $\CG$, let
$\RM\coloneqq \RM^{\loc}_{\CG,\mu}$ be the associated canonical
local model over $\CO$. Let $s$ denote the number of Schubert varieties of positive
codimension in the special fiber $\RM_k$. Then these varieties form a chain
\(
   S_0\subset\cdots\subset S_{s-1}.
\)
Let
\[
\RM=Z_0
\xleftarrow{b_1}
Z_1
\xleftarrow{b_2}
\cdots
\xleftarrow{b_i}
Z_s
\]
be the sequence of blow-ups such that $b_1$ is the blow-up of
$Z_0=\RM$ along $S_0$, and for $2\leq \ell\leq i$, the morphism $b_\ell:Z_\ell\longrightarrow Z_{\ell-1}$ is the blow-up along the strict transform of $S_{\ell-1}$ in $Z_{\ell-1}$. 

Then the resulting scheme $\RM^{\mathrm{reg}}=Z_s$ is regular, and its special fiber
has simple normal crossings with the multiplicity of each irreducible component at most two.
\end{thm}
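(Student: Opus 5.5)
We will prove the theorem by reducing it to an explicit computation on an affine chart around the worst point, using the group action to spread the result over all of $\RM$. Since every blow-up center is $\CG$-stable, each $Z_\ell$ carries a $\CG$-action and each $b_\ell$ is equivariant. Regularity and the normal crossings property are both open conditions that are stable under the group action. Every $\CG_k$-orbit in $\RM_k$ has the closed orbit $S_0$ in its closure, so the non-regular locus of $Z_s$, if nonempty, would have to meet the preimage of a neighbourhood of a point of $S_0$. It therefore suffices to work over a single $\CG$-translate of the standard affine chart $U\subset\RM$ centred at the worst point of $S_0$.

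The plan proceeds in four steps.

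\textbf{Step 1 (chain structure and explicit chart).} For maximal parahoric $\CG$ attached to the self-dual lattice chain $\{\Lambda_m,\Lambda_{m}^\vee\}$, we describe $\RM$ concretely. For $\GSp_{2g}$ and $\mu=(1^{(g)},0^{(g)})$ it is the naive model of Lagrangian subspaces $\CF_0\subset\Lambda_m$, $\CF_1\subset\Lambda_m^\vee$ compatible with the transition maps. For $\GSO_{2g}$ it is the corresponding closure with the spin condition selecting $\mu_\pm$. Following the standard analysis (G\"ortz, Pappas--Rapoport), the admissible set for maximal parahoric level is totally ordered. Hence the special fiber Schubert varieties are the loci
\[
S_j=\{\operatorname{rank}(\pi_0\text{-part of the transition map on }\CF)\le j\},
\]
and they form the asserted chain $S_0\subset\cdots\subset S_{s-1}$.

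On the big cell $U$ we write $\CF$ as the graph of a matrix $X$. In the symplectic case $X$ is symmetric; in the orthogonal case it is alternating, up to a twist for $\mu_-$. The model $U$ is then cut out by a condition of the form $X_1X_2=\pi_0$ on a block decomposition, together with the (skew)symmetry condition. Up to a smooth factor, this is a variety of pairs of (skew)symmetric matrices $A,B$ with
\[
AB=BA=\pi_0\,I,
\]
with rank conditions on $A$ and $B$. The locus $S_j$ is then $\{\operatorname{rank} A\le j,\ B=0\}$, or the analogous condition on $\operatorname{rank} B$.

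\textbf{Step 2 (inductive blow-ups).} We blow up successively along $\{A=B=0\}$ and then along the strict transforms of the rank loci. This mirrors the construction of complete quadrics (symmetric case) and complete skew forms (alternating case). We cover $Z_\ell$ by charts indexed by which entry generates the exceptional ideal. After a change of variables by the stabilizer (a congruence subgroup of $\GL$ acting by $A\mapsto gAg^t$), in each chart the strict transform of $S_\ell$ becomes a smaller instance of the same determinantal problem, of size reduced by one (symmetric) or two (alternating). Exceptional divisors contribute coordinates $t_1,\dots,t_\ell$. The equation at stage $\ell$ takes the form
\[
t_1^{e_1}\cdots t_\ell^{e_\ell}\,\pi'=\pi_0 .
\]

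\textbf{Step 3 (final chart and multiplicities).} After $s$ steps we obtain charts of the form $\CO[t_1,\dots,t_n,\dots]/(t_1^{e_1}\cdots t_r^{e_r}-\pi_0)$ with all $e_i\in\{1,2\}$. Such a ring is regular, and its special fiber is a simple normal crossings divisor with multiplicities $e_i$. The multiplicity two arises from the symmetric condition: blowing up the rank-$\le j$ locus of a symmetric matrix introduces a square of the exceptional parameter in the entries of the matrix, so one exceptional parameter appears squared in the equation. We will also verify that the strict transform of $\RM_k$ keeps multiplicity one, and that the components meet transversally in the sense of simple normal crossings. For the latter, each component must be regular, which follows from their description as smooth bundles over smaller instances.

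\textbf{Main obstacle.} The hardest step is Step 2: showing that in every chart of $Z_\ell$ the strict transform of $S_\ell$ is again, \'etale locally, of the same form, product a smooth factor, with the correct exceptional monomial. Getting the exponent bookkeeping exactly right is what forces $e_i\le 2$, and the orthogonal $\mu_-$ case needs separate care with the spin condition. Our first attempt will be to diagonalize via the $\CG$-action: move a unit entry of $A$ into a corner and split off a rank-one (or rank-two hyperbolic) summand, in the spirit of the inductive reduction for complete quadrics. This will be done uniformly in $\ell$.
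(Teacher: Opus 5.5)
\textbf{There is a genuine gap: your Step~1 misidentifies the singularity, and Step~2 lacks the argument that pins down the blow-up charts.} Your equivariance reduction to a neighbourhood of the closed orbit is fine. Moving a unit entry to a corner and passing to a Schur complement is also what the paper does.

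\textbf{The local chart.} When you write $\mathcal F$ and $\mathcal F^\vee=\mathcal F^\perp$ as graphs, the block relation $X_1X_2=\pi_0$ involves a single $2i\times2i$ matrix $X$. This matrix is a map between two different $2i$-dimensional pieces of the lattice chain, so it carries no symmetry. Duality then forces $X_2$ to be $\pm$ the adjoint of $X$. So the correct chart is $\RU_i\simeq\BA^{a_i}\times\CX(i,0)$, resp.\ $\BA^{a_i}\times\CY^\delta(i,0)$. It is cut out by $XBX^t=X^tBX=\pi_0B$ with $B=B_i^{\mp}$, plus the spin relations $\CJ_i^\delta$ for $\GSO_{2g}$, and $S_r$ is cut out by $(\pi_0,\wedge^{r+1}X)$. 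Two checks show your description does not match:
\begin{itemize}
\item For $g=2$, $i=1$ the chart is $\det X=\pi_0$ with $X$ an arbitrary $2\times2$ matrix, whose special fibre is irreducible. The flat closure of $AB=BA=\pi_0$ on pairs of symmetric $2\times2$ matrices has three special-fibre components.
\item Your $S_j=\{\operatorname{rank}A\le j,\ B=0\}$ cannot be right, since here $B=0$ forces $A=0$.
\end{itemize}

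\textbf{Consequences for Steps 2 and 3.} The relevant picture is a degeneration of the similitude monoid, not complete quadrics or skew forms. On $D_+(y_{1,1})$ the Schur complement $Z=C-\mathbf b\mathbf a^t$ satisfies the same equations in size $2d-2$, in both the symplectic and orthogonal cases. This is why exactly $s=i$ blow-ups suffice.

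The square also has a different source than symmetry: the equations are quadratic in $X$. On that chart $X=x_{1,1}Y$ and $T_0=x_{1,1}\alpha$, and the $(1,2d)$-entry forces $\alpha=x_{1,1}\eta$. Hence $\pi_0=\eta\,(x_{1,1}T_1\cdots T_\ell)^2$. Every exceptional divisor therefore has multiplicity exactly two, in both cases. Your Step~3 only asserts $e_i\in\{1,2\}$ and defers the bookkeeping, so the multiplicity bound is not proved.

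\textbf{Identifying the charts (the more serious gap).} A chart $D_+(y_{a,b})$ is defined by a saturation. The relations you can write down are the elimination of $u,v,\mathbf a^*,\mathbf b^*$, the equations for $Z$ and $\eta$, and, for $\GSO$, the spin relations transported by Sylvester's identity with twisted sign $\delta\tau_a\tau_b$. These only give a closed immersion of the chart into $\BA^{4d-2}\times\CX(d-1,\ell+1)$, resp.\ $\BA^{4d-4}\times\CY^{\delta\tau_a\tau_b}(d-1,\ell+1)$. Closing the induction needs two further steps:
\begin{itemize}
\item Enlarge the class of schemes to the family with $\pi_0=T_0(T_1\cdots T_\ell)^2$, whose special fibres are non-reduced.
\item Prove every member is integral and $\CO$-flat of the expected dimension. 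Then a closed immersion of integral schemes of equal dimension is an isomorphism.
\end{itemize}
The paper proves the integrality and flatness by base change to $F_0(\sqrt{-\pi_0})$. There the relation factors as $(\pi-SQ_\ell)(\pi+SQ_\ell)$ with $S^2=-T_0$, and the known semistable results apply to each factor.

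The same integrality and dimension count, applied to $C_r(d,\ell)$, is what identifies the strict transforms of the later centres. One also needs that the scalar chart $D_+(\alpha)$ is covered by matrix charts. Translating by a congruence subgroup does not compute saturations or strict transforms, so Step~2 as you describe it would not go through.
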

}

More precisely, $\RM^{\mathrm{reg}}$ is covered by affine open
subschemes isomorphic to
\begin{equation}\label{eq:main-charts}
 \Spec\frac{\CO[x_0,x_1,\ldots,x_N]}
 {\bigl(\pi_0-x_0(x_1\cdots x_s)^2\bigr)},
 \qquad
 N=
 \begin{cases}
   g(g+1)/2,&G=\GSp_{2g},\\
   g(g-1)/2,&G=\GSO_{2g}.
 \end{cases}
\end{equation}

{
\begin{remark}
\begin{altenumerate}
\item Together with earlier results, Theorem~\ref{thm:main} establishes
Conjecture~\ref{conj:equivariant-modification} at maximal parahoric
level in the following cases for classical groups.
For unramified groups of type~$A$, every maximal parahoric subgroup is
hyperspecial, so the canonical local model is smooth and we may
take $\RM^\reg=\RM^\loc_{\CG,\mu}$.
For the orthogonal local models of types~$B$ and~$D$ associated
with the minuscule cocharacter corresponding to the quadric of isotropic lines,
regular equivariant modifications were constructed by Pappas
and the second author in \cite{PaZa}. The symplectic and split even orthogonal cases considered here are
covered by Theorem~\ref{thm:main}, while the corresponding
quasi-split non-split orthogonal cases are covered by
\cite[Proposition~5.3]{YZZss}.
In the latter cases, the construction in \cite[\S~5]{YZZss}
in fact gives semi-stable modifications over the ring of integers of the reflex field. Moreover, for more cases of regular/semi-stable equivarant modifications in the literature, in some cases after a finite extension of the reflex field, we refer the reader to \cite[\S 2, Table 1]{YZZss}. 

\item
In \cite[Theorems~6.1 and~9.1]{YZZss}, we constructed semi-stable
modifications of the split even orthogonal and symplectic local models
after the ramified quadratic extension
$F_0(\sqrt{-\pi_0})/F_0$.
The proof of Theorem~\ref{thm:main} follows the same general strategy,
but working over the original reflex field introduces two additional
difficulties.
First, the inductive chart descriptions in \textit{loc.\ cit.}
depend on the ramified quadratic extension and must be replaced by
an induction argument defined over $\CO_{F_0}$.
{Second, the successive blow-ups in the present construction
have non-reduced special fibers, which makes the plot thicker 
in determining the intermediate blow-up charts; for example, in establishing the
flatness and integrality results of
Proposition~\ref{prop:integrality}}.
\end{altenumerate}
\end{remark}
}

The proof of the above theorem uses the affine charts of the local models and their Schubert stratifications described in \cite{Goertz,YangSplit,YangTopFlat,YZZss}. On these charts, the chain of Schubert varieties are defined by determinantal ideals 
$(\pi_0,\wedge^{r+1}X)$ for $0\leq r<s$, where $X$ is a $2s\times2s$ square matrix and $s$ is as in Theorem~\ref{thm:main}. Let $\mathcal U(d,\ell)$ denote the auxiliary matrix scheme introduced in \S \ref{subsec-matrixsch}, which is integral and $\CO$-flat by Proposition \ref{prop:integrality}. Roughly, the scheme $\mathcal U(d,\ell)$ is defined using a $2d\times 2d$ matrix $X$ satisfying the matrix
relations of the affine local-model charts, with $\pi_0$
replaced by $T_0$, together with the relation
\[
                    \pi_0=T_0(T_1\cdots T_\ell)^2.
\]
Each affine chart of the local model is a product of an
affine space over $\CO$ with a scheme $\mathcal U(s,0)$.
We develop an {inductive} way to show that the blow-up of $\mathcal U(d,\ell)$ along the first determinantal ideal
$(X,T_0)$ is covered by affine charts of the form
\[
             \mathbb A_{\CO}^{b_d}
             \times_{\CO}\mathcal U(d-1,\ell+1),
 \qquad
 b_d=
 \begin{cases}
   4d-2,&\text{symplectic case},\\
   4d-4,&\text{orthogonal case}.
 \end{cases}
\]
Under these identifications, the strict transforms of the
remaining determinantal ideals are products of
$\mathbb A_{\CO}^{b_d}$ with the corresponding determinantal
ideals in $\mathcal U(d-1,\ell+1)$. 
{Iterating this construction,} after $s$ successive blow-ups, the resulting charts are
products of affine spaces with
\[
 \Spec\frac{\CO[T_0,\ldots,T_s]}
 {\bigl(\pi_0-T_0(T_1\cdots T_s)^2\bigr)}.
\]
This gives \eqref{eq:main-charts}, and hence proves Theorem \ref{thm:main}.

Theorem~\ref{thm:main} also has an application to the construction of regular integral models of Shimura varieties. Let $\mathscr S_K$ be the canonical integral model with a local model diagram  (see e.g., \cite[\S 11.1]{YZZss})
\[
\begin{tikzcd}
& \widetilde{\mathscr S}_K
  \arrow[dl, "\pi_K"']
  \arrow[dr, "q_K"] & \\
\mathscr S_K && \RM
\end{tikzcd}
\]
over $\CO$, where $\pi_K$ is a $\CG$-torsor, $q_K$ is smooth
and $\CG$-equivariant, and $\RM$ is as in
Theorem~\ref{thm:main}. For $0\leq j<s$, let $\mathscr Z_j\subset\mathscr S_{K,k}$
be the closure of the Kottwitz--Rapoport stratum corresponding
to the open Schubert cell in $S_j$. Under the local model
diagram, we have
\[
                 \pi_K^{-1}(\mathscr Z_j)=q_K^{-1}(S_j).
\]
As in \cite[Remark 11.2]{YZZss}, after successively
blowing up $\sS_K$ along $\mathscr Z_0,\ldots,\mathscr Z_{s-1}$, we obtain a regular integral model $\sS_K^\reg$, which admits a local
model diagram with $\RM$ replaced by $\RM^{\mathrm{reg}}$.
This is a linear modification in the sense of \cite{Pappas}.
\begin{cor}\label{cor:shimura-regular}
The scheme $\mathscr S_K^{\mathrm{reg}}$ is a regular integral
model with generic fiber equal to $ \mathscr S_K \otimes_{\mathcal{O}} F_0  $. Its special fiber
has normal crossings with the multiplicity of each irreducible component at most two.
\end{cor}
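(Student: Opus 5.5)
The plan is to deduce Corollary~\ref{cor:shimura-regular} from Theorem~\ref{thm:main} through the local model diagram. The one tool needed is that blow-ups commute with flat base change. Smoothness is étale local, and so are regularity and the property ``special fiber has normal crossings with component multiplicities at most two''.

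\emph{Step 1: lift the blow-ups to $\widetilde{\mathscr S}_K$.} Write $Z_\ell$ for the successive blow-ups of Theorem~\ref{thm:main}, and let $\widetilde{\mathscr S}_K^{(\ell)}$ be obtained by successively blowing up $\widetilde{\mathscr S}_K$ along the strict transforms of $q_K^{-1}(S_0),\dots,q_K^{-1}(S_{s-1})$. Since $q_K$ is smooth, hence flat, blow-up commutes with this base change. Inductively, this gives a canonical isomorphism
\[
\widetilde{\mathscr S}_K^{(\ell)}\cong \widetilde{\mathscr S}_K\times_{\RM}Z_\ell .
\]
Under it, the strict transform of $q_K^{-1}(S_\ell)$ corresponds to the pullback of the strict transform of $S_\ell$. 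The reason is that, for flat morphisms, the strict transform is the closure of the complement of the exceptional divisor, and flat pullback preserves scheme-theoretic closures.

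\emph{Step 2: the same blow-ups from the $\mathscr S_K$ side.} By the identity $\pi_K^{-1}(\mathscr Z_j)=q_K^{-1}(S_j)$, and again using that $\pi_K$ is a smooth torsor, the same reasoning identifies $\widetilde{\mathscr S}_K^{(s)}$ with $\widetilde{\mathscr S}_K\times_{\mathscr S_K}\mathscr S_K^{\mathrm{reg}}$. The $\CG$-action lifts to $\widetilde{\mathscr S}_K^{(s)}$ because the centers are $\CG$-stable: each $S_j$ is a Schubert variety and hence $\CG$-stable, and the maps are equivariant. So $\widetilde{\mathscr S}_K^{(s)}\to\mathscr S_K^{\mathrm{reg}}$ is a $\CG$-torsor. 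It is smooth over $\mathscr S_K^{\mathrm{reg}}$ and also smooth over $\RM^{\mathrm{reg}}=Z_s$, which gives the local model diagram with $\RM^{\mathrm{reg}}$ in place of $\RM$.

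\emph{Step 3: transfer the properties.} For each closed point $x$ of $\mathscr S_K^{\mathrm{reg}}$, choose a point $\tilde x$ above it in the torsor, using that the torsor is fpqc surjective. Then $\mathscr S_K^{\mathrm{reg}}$ and $\RM^{\mathrm{reg}}$ are \'etale locally isomorphic near $x$ and $q(\tilde x)$, up to smooth factors. Concretely, the completed local ring at $x$ tensored with a power series ring is isomorphic to that of $\RM^{\mathrm{reg}}$ at $q(\tilde x)$ tensored with another power series ring. By Theorem~\ref{thm:main}, in the form of \eqref{eq:main-charts}, the latter is regular and has the shape
\[
\CO[[x_0,\dots]]/\bigl(\pi_0-x_0(x_1\cdots x_r)^2\bigr)
\]
after adding smooth variables. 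Regularity descends along faithfully flat maps with regular fibers, and the normal crossings form of $\pi_0$ with multiplicities in $\{1,2\}$ is preserved in this description, so both properties hold for $\mathscr S_K^{\mathrm{reg}}$. Simple normal crossings need not descend, since irreducible components may self-intersect after the quotient; this is why the corollary asserts only normal crossings. Finally, the centers lie in the special fiber, so the blow-ups do not change the generic fiber, which gives the statement about $\mathscr S_K\otimes_{\CO}F_0$.

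I expect the main obstacle to be Step 1, namely the compatibility of \emph{iterated} strict transforms with smooth base change. The key points are that smooth pullback of a blow-up is the blow-up of the pulled-back ideal, and that the exceptional divisor pulls back to the exceptional divisor, so closures are preserved at each stage.
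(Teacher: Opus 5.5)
Your proposal is correct and follows the same route as the paper. You blow up $\mathscr S_K$ along the successive strict transforms of the $\mathscr Z_j$, use $\pi_K^{-1}(\mathscr Z_j)=q_K^{-1}(S_j)$ and the compatibility of blow-ups and strict transforms with smooth base change to obtain a local model diagram with $\RM^{\mathrm{reg}}$ in place of $\RM$, and then transfer regularity and the normal crossings property with multiplicities from Theorem~\ref{thm:main}. The paper simply cites \cite[Remark~11.2]{YZZss} for exactly these steps; the only point to tidy in your Step~3 is to compare strict henselizations rather than completed local rings, since the residue fields at $x$, $\tilde x$ and $q(\tilde x)$ need not agree.
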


\begin{example}
In the Siegel case, take $F_0=\mathbb Q_p$ and
$\CO=\mathbb Z_p$. Fix $0\leq r\leq g$ and an integer
$N\geq3$ prime to $p$, and put
\[
    J_r=(r+2g\mathbb Z)\cup(-r+2g\mathbb Z).
\]
Let $\mathscr A_{g,J_r,N}$ be the integral Siegel moduli
scheme over $\BZ_p$ with maximal parahoric level at $p$ indexed by
$J_r$ and full level-$N$ structure away from $p$;
see \cite[Definition~4.1]{Hoff}. It parametrizes isomorphism classes of triples $(A,\lambda,\eta)$, where $A$ is an abelian scheme of
relative dimension $g$, $\lambda:A\longrightarrow A^\vee$
is a polarization satisfying
\[
    \ker(\lambda)\subset A[p],
    \qquad \deg(\lambda)=p^{2r},
\]
and $\eta$ is a full level-$N$ structure compatible
with the Weil pairing up to similitude, in the sense
of \textit{loc.\ cit.}

By \cite[Proposition~4.2]{Hoff},
$\mathscr A_{g,J_r,N}$ admits a local model diagram
with the corresponding symplectic local model $\RM$.
Corollary~\ref{cor:shimura-regular} therefore yields
a regular modification $\sA^{\reg}_{g,J,N}$ of $\mathscr A_{g,J_r,N}$
over $\mathbb Z_p$, obtained by successive blow-ups
along closures of Kottwitz--Rapoport strata.
Its special fiber has normal crossings, with each
irreducible component having multiplicity at most two. {It would be very interesting if one can find explicit moduli interpretation for the regular integral model $\sA^{\reg}_{g,J,N}$.}
\end{example}

{\bf Acknowledgements:} 
We thank Y. Luo and G. Pappas for useful comments and suggestions. The author J. Yang was supported by the Shuimu Tsinghua Scholar Program of Tsinghua University. The author I. Zachos was supported by Germany's Excellence Strategy EXC~2044--390685587 ``Mathematics M\"unster: Dynamics--Geometry--Structure" and by the CRC~1442 ``Geometry: Deformations and Rigidity'' of the DFG. The author Z. Zhao was supported by the Tianyuan Fund for Mathematics of NSFC (Grant No. 12526543),  and the Fundamental Research Funds for the Central Universities (Grant No. FRF-TP-26-121).

\medskip
\textbf{AI usage:} The authors used ChatGPT to assist with typos and language editing, and to help identify a sign error in the proof of Proposition \ref{prop:matrix-charts} in the first draft.

\section{ {Local models and their resolutions} }
\label{sec:preliminaries}

We retain assumption \eqref{ass:groups} and the notation $\CO,\pi_0,k$.
Let $H_d$ denote the unit anti-diagonal matrix of size $d$, and put
\[
 B_d^-\coloneqq\begin{pmatrix}0&H_d\\-H_d&0\end{pmatrix},
 \qquad
 B_d^+\coloneqq\begin{pmatrix}0&H_d\\ H_d&0\end{pmatrix}.
\]

{
Let $e_1,\ldots,e_{2g}$ denote the standard basis of $V\coloneqq F_0^{2g}$. Equip $V$ with the bilinear form $B_g^\pm$ under the standard basis.
Every maximal parahoric level is $G(F_0)$-conjugate to the connected
stabilizer of a standard vertex lattice in $V$
\begin{equation}\label{eq:standard-lattice}
 \Lambda_i\coloneqq\CO\langle
 \pi_0^{-1}e_1,\ldots,\pi_0^{-1}e_i,
 e_{i+1},\ldots,e_{2g}\rangle,
\end{equation}
where $0\leq i\leq\lfloor g/2\rfloor$, with $i\neq1$ in the
orthogonal case; see \cite[Proposition~2.11]{YangTopFlat} and
\cite[\S 8]{YZZss}. Write $\CG_i$ for the corresponding parahoric group scheme and abbreviate
}

\[
                         \RM_i\coloneqq\RM^{\loc}_{\CG_i,\mu}.
\]
The cocharacter $\mu$ is as in ($\dagger$). 
The model $\RM_0$ is smooth, so only
$i>0$ requires a blow-up calculation.

\subsection{Auxiliary matrix schemes} \label{subsec-matrixsch}

Let $d\geq1$, $\ell\geq0$, and $Q_\ell\coloneqq T_1\cdots T_\ell$,
with $Q_0=1$. For a $2d\times2d$ matrix of variables $X=(x_{a,b})$, set
\begin{equation*}
 \CX(d,\ell)\coloneqq\Spec
 \frac{\CO[X,T_0,T_1,\ldots,T_\ell]}
 {\bigl(XB_d^-X^t-T_0B_d^-,\ X^tB_d^-X-T_0B_d^-,\
          \pi_0-Q_\ell^2T_0\bigr)}.
\end{equation*}

{Fix $\delta\in\cbra{\pm 1}$.} {For
subsets $U$ and $U'$ of $\{1,\ldots,2d\}$ with the same cardinality, write
$[U:U'](X)$ for the minor of $X$ with its rows and columns given by $U$ and $U'$ in increasing order. Put
\[
 a^*\coloneqq 2d+1-a,\qquad
 U^\perp\coloneqq\{1,\ldots,2d\}\setminus\{a^*:a\in U\}.
\]
For $U\subset\{1,\ldots,2d\}$, let $\sigma_U$ be the permutation of
$\{1,\ldots,2d\}$ sending $\{1,\ldots,|U|\}$ to $U$ in increasing
order and $\{|U|+1,\ldots,2d\}$ to the complement $U^c$ in increasing order. 
Define $\CJ_d^\delta(T_0)$ to be generated by {the following ``spin conditions"}
\begin{align*}
 [U:U'](X)&-\delta\,\sgn(\sigma_U)\sgn(\sigma_{U'})
 [U^\perp:{U'}^{\perp}](X),&&|U|=|U'|=d,  \\
 [U:U'](X)&+\delta\,\sgn(\sigma_U)\sgn(\sigma_{U'})T_0
 [U^\perp:{U'}^\perp](X),&&|U|=|U'|=d+1.
\end{align*}
}
{Define}
\begin{equation*}
 \CY^\delta(d,\ell)\coloneqq\Spec
 \frac{\CO[X,T_0,T_1,\ldots,T_\ell]}
 {\bigl(XB_d^+X^t+T_0B_d^+,\ X^tB_d^+X+T_0B_d^+,\
         \pi_0-Q_\ell^2T_0\bigr)+\CJ_d^\delta(T_0)}.
\end{equation*}

For $d=0$, we set
\begin{equation}\label{eq:terminal-family}
 \CX(0,\ell)=\CY^+(0,\ell)\coloneqq
 \Spec\frac{\CO[T_0,T_1,\ldots,T_\ell]}
 {\bigl(\pi_0-Q_\ell^2T_0\bigr)},
 \qquad \CY^-(0,\ell)\coloneqq\emptyset.
\end{equation}

We will use the following basic properties of the auxiliary schemes
throughout the blow-up calculations in \S \ref{sec:calculation}.

\begin{prop}\label{prop:integrality}
The schemes $\CX(d,\ell)$ and $\CY^\delta(d,\ell)$ (if non-empty) are integral and flat over $\CO$. Their total dimensions are, respectively, $ d(2d+1)+\ell+1$ and $  d(2d-1)+\ell+1.$
\end{prop}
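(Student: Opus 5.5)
The strategy is to reduce everything to the case $\ell=0$ by viewing $T_0$ as a formal parameter, and then pass to general $\ell$ by a base change. Concretely, let $\CX'(d)$ (resp.\ $\CY'^\delta(d)$) be the scheme over $\CO[t]$ cut out by the same matrix equations with $T_0$ replaced by a variable $t$, and with no relation involving $\pi_0$. Then
\[
\CX(d,\ell)=\CX'(d)\times_{\Spec\CO[t]}\Spec\frac{\CO[t,T_1,\ldots,T_\ell]}{(\pi_0-Q_\ell^2t)},
\]
and similarly for $\CY^\delta$.

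The first step is to show that $\CX'(d)$ and $\CY'^\delta(d)$ are integral, and flat over $\CO[t]$ after inverting $t$ and also along $t=0$. The fibers of $\CX'(d)$ over $\CO[t]$ are as follows.
\begin{itemize}
\item For $t$ a unit, the fiber is a torsor under $\mathrm{Sp}_{2d}$: the condition $XB^-X^t=tB^-$ says that $X$ is a symplectic similitude with multiplier $t$. Hence it is smooth of relative dimension $d(2d+1)$.
\item For $t=0$, the fiber is the scheme of $X$ with $XB^-X^t=X^tB^-X=0$. This is essentially the special fiber of the Görtz/Yang local model chart; via the charts in \cite{Goertz,YangSplit,YangTopFlat} it is known to be reduced and of dimension $d(2d+1)$, with irreducible components of that dimension.
\end{itemize}
In the orthogonal case, the spin conditions $\CJ^\delta_d(t)$ select one of the two components of $\mathrm{O}$ when $t$ is invertible. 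This gives $\mathrm{SO}$-torsor fibers of dimension $d(2d-1)$, and at $t=0$ the fiber is the special fiber of the spin local model chart. I would invoke the flatness results of \cite{YangSplit,YangTopFlat,YZZss}: the base change $t\mapsto\pi_0$ is exactly the local model chart, which is flat, normal, and has reduced special fiber. Using the $\mathbb G_m$-action scaling $X$ by $\lambda$ and $t$ by $\lambda^2$, which makes the equations homogeneous, I would deduce that $\CX'(d)$ is flat over $\CO[t]$. The point is that $t-\pi_0$ is then a nonzerodivisor and the graded structure propagates. I would also deduce that $\CX'(d)$ is Cohen--Macaulay with reduced fiber $t=0$. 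Integrality of $\CX'(d)$ then follows because the generic fiber over $\CO[t]$ is irreducible (a connected group torsor) and there are no embedded components, by the Cohen--Macaulay property.

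The second step is the base change. The map $\Spec\CO[t,T_1,\ldots,T_\ell]/(\pi_0-Q_\ell^2t)\to\Spec\CO[t]$ is the problem, because it is not flat (over $t=0$ the $T_i$ are forced to satisfy $Q_\ell^2=0$ only modulo $\pi_0$). So instead I would write
\[
\CX(d,\ell)=\CX'(d)\times_{\CO}\mathbb A^\ell\cap V(\pi_0-Q_\ell^2t).
\]
The ambient product $\CX'(d)\times\mathbb A^\ell$ is integral and Cohen--Macaulay of dimension $d(2d+1)+1+1+\ell$ (counting $\CO$ and $t$). Since $\pi_0-Q_\ell^2t$ is a nonzero element of an integral ring, the quotient is Cohen--Macaulay of dimension $d(2d+1)+\ell+1$, with all components of that dimension. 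For $\CO$-flatness it suffices that $\pi_0$ is a nonzerodivisor. By Cohen--Macaulayness this amounts to $\dim V(\pi_0,Q_\ell^2t)$ having dimension one less. That holds because $V(\pi_0,Q_\ell t)$ is a union of the loci $T_i=0$ and $t=0$ in the special fiber, each of codimension one, using that the fiber at $t=0$ has the right dimension.

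The last step is irreducibility, which I expect to be the main obstacle. Flatness shows that the scheme is the closure of its generic fiber over $F_0$. On the generic fiber $T_0=\pi_0/Q_\ell^2$ is a unit, so the generic fiber is an $\mathrm{(S)Sp/SO}$-torsor over $\Spec F_0[T_1^{\pm1},\ldots,T_\ell^{\pm1}]$, which is irreducible. Reducedness follows from Cohen--Macaulayness plus generic reducedness (the $R_0+S_1$ criterion); generic reducedness is clear on the generic fiber, and every component meets it by flatness. The delicate input is justifying the Cohen--Macaulay and flat structure of $\CX'(d)$ and $\CY'^\delta(d)$ over $\CO[t]$ from the cited local model results, particularly for the spin conditions. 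If the cited results do not directly give Cohen--Macaulayness, I would instead run the argument by induction on $d$ using the blow-up charts of \S\ref{sec:calculation}, which exhibit $\mathbb A^{b_d}\times\CU(d-1,\ell+1)$ as opens.
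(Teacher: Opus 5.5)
\textbf{There is a genuine gap.} The downstream part of your plan is sound \emph{provided} the one-parameter ring $A=\CO[X,t]/(\cdots)$ (matrix and spin relations with $T_0=t$) is Cohen--Macaulay: that is, the dimension count after the non-flat base change $\pi_0=Q_\ell^2t$ and the final generic-fiber argument. But you never establish this, and it is load-bearing. Without it, knowing that $V(\pi_0)$ has codimension one in $\CX(d,\ell)$ does not stop $\pi_0$ from lying in an embedded prime. The facts you quote for the chart $A/(t-\pi_0)=\CX(d,0)$ (flat, normal, reduced and equidimensional special fiber) do not imply Cohen--Macaulayness.

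The sentence ``$t-\pi_0$ is then a nonzerodivisor and the graded structure propagates'' is not an argument, and it runs backwards. Regularity of $t-\pi_0$ on $A$ is what you need in order to transfer anything from $A/(t-\pi_0)$ to $A$. It in turn requires $A$ to be $\pi_0$-torsion-free, which is part of what must be shown.

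To close the gap you need two ingredients.
\begin{itemize}
\item[(i)] A genuine Cohen--Macaulay input. For example, He's theorem that the canonical local models $\RM_i$ are Cohen--Macaulay, transported to $\CX(d,0)$ and $\CY^\delta(d,0)$ through Proposition~\ref{prop:local-input}. The case $\CY^\delta(1,\ell)$ is not such a chart, since $i\neq1$ in the orthogonal case, so it must be checked by hand.
\item[(ii)] An actual proof that $t-\pi_0$ is regular on $A$. Use the grading $\deg X=1$, $\deg t=2$. Let $T_m$ be the lowest nonzero graded piece of the $\pi_0$-power torsion of $A$, and take $c\in T_m$. Flatness of $A/(t-\pi_0)$ over $\CO$ gives $c=(t-\pi_0)b$. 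Comparing lowest-degree components forces $c=-\pi_0b_m$ with $b_m\in T_m$. Hence $T_m=\pi_0T_m$, so $T_m=0$ by Nakayama, and $A$ is $\CO$-flat. The same lowest-degree comparison then shows $t-\pi_0$ is a nonzerodivisor.
\end{itemize}
With (i) and (ii), $A$ is Cohen--Macaulay at $\mathfrak m=(\pi_0,X,t)$, because $A_{\mathfrak m}/(t-\pi_0)$ is. The $\mathbb G_m$-contraction spreads this to all of $\Spec A$, since every proper homogeneous ideal lies in $\mathfrak m$.

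Your fallback, induction on $d$ via the charts of \S\ref{sec:calculation}, is circular as the paper is organized. Proposition~\ref{prop:matrix-charts} uses the present proposition (integrality and dimension of the target) to upgrade its closed immersion to an isomorphism.

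\textbf{How the paper avoids this.} It needs neither Cohen--Macaulayness nor flatness over $\CO[t]$. It adjoins $S$ with $S^2=-T_0$ and $\pi$ with $\pi^2=-\pi_0$, so that $\pi_0-T_0Q_\ell^2=-(\pi-SQ_\ell)(\pi+SQ_\ell)$. Since $\CO[T_0]\to\CO_F[S]$ is a direct summand, $R$ embeds into $A_1/(ab)\hookrightarrow A_1/(a)\times A_1/(b)$. The identity $(a)\cap(b)=(ab)$ follows from $a+b=2\pi$ and the $\CO_F$-flatness of $A_1/(a)$. Both factors are integral and $\CO_F$-flat by \cite[Propositions~6.3 and~9.3]{YZZss}. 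Hence $R$ is $\pi_0$-torsion-free, and integrality follows from the generic fiber exactly as in your last step. Once repaired, your route would trade the dependence on \cite{YZZss} for He's theorem plus a graded-ring argument, at the cost of noticeably more machinery.
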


{
\begin{proof}
For $d=0$ this follows from \eqref{eq:terminal-family}. Suppose $d>0$,
and let $R$ be the coordinate ring of either $\CX(d,\ell)$ or $\CY^\delta(d,\ell)$. Write
$B=B_d^\varepsilon$ with $\varepsilon=\pm 1$, set 
\[
 A\coloneqq
 \frac{\CO[X,T_0,T_1,\ldots,T_\ell]}
 {\bigl(XBX^t+\varepsilon T_0B,\ X^tBX+\varepsilon T_0B\bigr)+\CJ},
\]
where $\CJ\coloneqq 0$ for $\CX(d,\ell)$ and
$\CJ\coloneqq \CJ_d^\delta(T_0)$ for $\CY^\delta(d,\ell)$. Then we have \begin{flalign*}
    R = A/(\pi_0-T_0Q_\ell^2).
\end{flalign*}
Let $F\coloneqq F_0(\pi)$ with $\pi^2=-\pi_0$ be a quadratic extension of $F_0$. Then the inclusion \begin{flalign*}
    \CO[T_0]\hookrightarrow \CO_F[T_0,S]/(S^2+T_0)
\end{flalign*}
is a direct summand as $\CO[T_0]$-modules. This induces an injection \begin{flalign*}
    R\hookrightarrow R\otimes_{\CO[T_0]}(\CO_F[T_0,S]/(S^2+T_0)) = \frac{A_1}{(\pi-SQ_\ell)(\pi+SQ_\ell)},
\end{flalign*} 
where $A_1\coloneqq (A\otimes_{\CO}\CO_F)[S]/(S^2+T_0)$.
By \cite[Propositions~6.3 and~9.3]{YZZss}, the rings
$A_1/(\pi-SQ_\ell)$ and $A_1/(\pi+SQ_\ell)$ are integral and
$\CO_F$-flat. Put
\[
 a\coloneqq \pi-SQ_\ell,\qquad b\coloneqq \pi+SQ_\ell.
\]
Then $(a)$ and $(b)$ are prime ideals in $A_1$.
For any $x\in (a)\cap (b)\sset A_1$, write $x=bx_1$ for some $x_1\in A_1$. Then $x=(2\pi-a)x_1\in (a)$ implies that $\pi x_1\in (a)$. Since $A_1/(a)$ is $\CO_F$-flat, we obtain that $x_1\in (a)$, and hence $x\in (ab)$. Therefore
\[
 (a)\cap(b)=(ab),
\]
and consequently
\[
 R\hookrightarrow A_1/(ab) \hookrightarrow A_1/(a)\times A_1/(b).
\]
By $\CO_F$-flatness, $A_1/(a)\times A_1/(b)$ is $\pi$-torsion-free.
Thus $R$ is $\pi_0$-torsion-free, and hence $R$ is flat over $\CO$.

On the generic fiber, the relation $\pi_0=T_0Q_\ell^2$ implies that $T_0,T_1,\ldots,T_\ell$ are invertible. 
Under the transformation $$W\coloneqq  \begin{pmatrix}
 -\varepsilon T_0\inverse I_d&0\\
 0&I_d
 \end{pmatrix}\cdot X, $$ 
we obtain that (cf. the proof of \cite[Lemma 6.4 (2)]{YZZss}) 
\[
R[1/\pi_0]\simeq
\frac{F_0[W,T_0^\pm,\ldots,T_\ell^\pm]}
{(WBW^t-B, W^tBW-B, f, \pi_0-T_0Q_\ell^2)},
\]
where $f=0$ for $\CX(d,\ell)$, and $f=\det(W)-\delta$ for $\CY^\delta(d,\ell)$. Here, we use that in the case $\CY^\delta(d,\ell)$, the Jacobi's identity on minors (\cite[Lemma 4.3 (3)]{YangSplit}) shows that over the generic fiber, the spin conditions in $\CJ^\delta_d(T_0)$ are equivalent to $\det(W)=\delta$.

Hence the generic fiber is isomorphic to
\[
 \Sp_{2d,F_0}\times_{F_0}(\mathbb G_m)^\ell
 \quad\text{or}\quad
 \mathrm O_{2d,F_0}^{\det=\delta}
 \times_{F_0}(\mathbb G_m)^\ell,
\]
respectively. Here $\mathrm O_{2d,F_0}^{\det=\delta}$ denotes the connected component of the orthogonal group $\mathrm O_{2d}$ cut out by $\det(W)=\delta$. In particular, the generic fiber is geometrically integral, of dimension
$d(2d+1)+\ell$ in the symplectic case and $d(2d-1)+\ell$ in the
orthogonal case. Since $R$ is flat over $\CO$, it embeds into $R[1/\pi_0]$. Thus $R$ is integral with the stated total dimension.
\end{proof}
}

For either family $\CX(d,\ell)$ or $\CY^\delta(d,\ell)$, let
\begin{equation}\label{eq:auxiliary-centers}
 C_r(d,\ell)\coloneqq V(T_0,\wedge^rX),\qquad 1\leq r\leq d,
\end{equation}
denote the subscheme cut out by the ideal generated by $T_0$ and the determinantal ideal $\wedge^r X$.
Here $\wedge^rX$ denotes the ideal generated by the $r\times r$ minors of $X$. These subschemes $C_r(d,\ell)$ will be the centers that we are going to blow up in the next section.

\subsection{{Local charts of $\RM_i$ and their resolutions}}

\begin{prop}\label{prop:local-input}
Let $i>0$ be as in \eqref{eq:standard-lattice}. The local model $\RM_i$
is covered by affine open subschemes $\RU_i$ with isomorphisms
\[
 \RU_i\simeq
 \begin{cases}
 \BA_{\CO}^{a_i}\times_{\CO}\CX(i,0),
       &G=\GSp_{2g},\\[2pt]
 \BA_{\CO}^{a_i}\times_{\CO}\CY^\delta(i,0),
       &G=\GSO_{2g}, 
 \end{cases}  
\]
where
\[
 a_i\coloneqq
 \begin{cases}
 (g-2i)(g+2i+1)/2,&G=\GSp_{2g},\\
 (g-2i)(g+2i-1)/2,&G=\GSO_{2g}.
 \end{cases}
\]
There is a chain of Schubert varieties
\[
 S_0\subset S_1\subset\cdots\subset S_{i-1}\subset\RM_{i,k},
\]
stable under $\CG_i$. Under the above identifications, for $0\leq r\leq i-1$, we have
\[
 S_r\cap\RU_i
 =\BA_{k}^{a_i}\times_{k}
   V(\pi_0,\wedge^{r+1}X)
 =\BA_{k}^{a_i}\times_{k}C_{r+1}(i,0),
\]    
{and the scheme $$C_{r+1}(i,0)=\Spec \frac{k[X]}{(XB_i^\pm X^t,X^tB_i^\pm X,\wedge^{r+1}X)}$$ is integral of dimension $r(4i-2r-1)$ (resp. $r(4i-2r+1)$) for $G=\GSO_{2g}$ (resp. $G=\GSp_{2g}$).  }
\end{prop}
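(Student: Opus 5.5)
The plan is to import the chart description of $\RM_i$ from the explicit lattice-theoretic model in \cite{Goertz,YangSplit,YangTopFlat} and put it in the form of Proposition~\ref{prop:local-input}. First I realize $\RM_i$ (via \cite[\S 8]{YZZss} and \cite{YangTopFlat}) as the closed subscheme of the Grassmannian of rank-$g$ subspaces $\CF_j\subset\Lambda_j\otimes\CO$, $j\in\{i,-i\}$ (using $\Lambda_{-i}=\Lambda_i^\vee$ up to scaling). These subspaces are required to be compatible with the transition maps, Lagrangian for the pairing, and, in the orthogonal case, to satisfy the spin condition. The conditions are stable under $\CG_i$, and flatness of this naive-plus-spin model is given by loc.\ cit. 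The affine open cover is obtained by translating the standard big cell around the worst point by $\CG_i(\CO)$; it suffices to treat the big cell, since the $\CG_i$-translates of it cover $\RM_i$ (every orbit closure contains the closed orbit $S_0$).

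On the big cell I write $\CF_i$ as the graph of a $g\times g$ matrix. I then decompose it in block form according to the splitting of the basis of $\Lambda_i$ into the $2i$ vectors $\pi_0^{-1}e_1,\dots,\pi_0^{-1}e_i,e_{2g-i+1},\dots,e_{2g}$ and the remaining $2g-2i$ ones. The Lagrangian condition together with the compatibility with multiplication by $\pi_0$ lets me solve for all entries outside a $2i\times 2i$ block $X$. The solved entries become free affine coordinates: a symmetric (resp.\ alternating) $(g-2i)\times(g-2i)$ block plus the mixed blocks. The blocks left over impose exactly $XB^\mp_iX^t=\mp\pi_0B^\mp_i$ and $X^tB^\mp_iX=\mp\pi_0B^\mp_i$, after rescaling signs. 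This is the standard computation of \cite[\S 4]{Goertz} in the symplectic case and \cite[\S 5]{YangSplit} in the orthogonal case, where the spin conditions reduce to $\CJ^\delta_i(\pi_0)$ with $\delta$ determined by $\mu_\pm$. Counting the free variables gives $a_i$: for $\GSp$ the count is $\dim\RM_{i,F_0}-\dim\CX(i,0)+1=g(g+1)/2-i(2i+1)$, which equals $(g-2i)(g+2i+1)/2$, and the orthogonal count is analogous. I will check this dimension bookkeeping against Proposition~\ref{prop:integrality} rather than track every block.

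For the Schubert stratification, the $\CG_i\otimes k$-orbits on $\RM_{i,k}$ are indexed by the $\mu$-admissible set for the maximal parahoric. For these vertex levels the orbits are determined by the rank of the induced map $\CF_{i}\to\CF_{-i}$, equivalently by $\operatorname{rank}X$ on the chart. So the orbit closures form a chain, with $S_r$ the locus where this rank is at most $r$. On $\RU_i$ this locus is $V(\pi_0,\wedge^{r+1}X)$ times the affine factor, and the maximal rank $i$ is the open stratum, which gives exactly $i$ closures of positive codimension. Finally, integrality and the dimension of $C_{r+1}(i,0)$: the variety of $X$ with $XBX^t=X^tBX=0$ and rank at most $r$ is the closure of an orbit of $\Sp_{2i}\times\Sp_{2i}$ (resp.\ $\mathrm O\times\mathrm O$ with spin constraint). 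Its points are pairs (image $\subset$ isotropic, kernel $\supset$ coisotropic) with $\operatorname{im}X$ isotropic of dimension $r$ and $\ker X\supset(\operatorname{im}X^t)^\perp$. The dimension is then $2\dim\mathrm{IGr}(r,2i)+r^2$, which gives $r(4i-2r+1)$ in the symplectic case and $r(4i-2r-1)$ in the orthogonal case.

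The main obstacle is reducedness of $k[X]/(XBX^t,X^tBX,\wedge^{r+1}X)$, i.e.\ that these equations generate the full ideal of the orbit closure, together with irreducibility (connectedness of the orbit) in the orthogonal case. I would cite the reducedness of Schubert varieties in the special fiber from \cite{YangTopFlat,YZZss}: because the special fiber of $\RM_i$ is reduced with normal Schubert varieties, the reduced Schubert variety coincides scheme-theoretically with the rank locus there. Failing that, I would prove it directly using standard monomial theory for these determinantal-isotropic varieties.
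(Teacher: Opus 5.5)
Your overall route, reconstructing the charts and citing the chart computation, is reasonable, since the paper's own proof is nothing more than a citation of \cite[Propositions~8.6, 8.8]{YZZss} and \cite[Theorem~4.6(1), Proposition~4.7(1)]{YZZss}. The gap is the scheme-theoretic step, which you correctly single out as the main obstacle but then settle with an inference that does not hold.

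Reducedness of $\RM_{i,k}$ together with normality of the reduced Schubert variety $S_r$ says nothing about whether the closed subscheme $V(\pi_0,\wedge^{r+1}X)\subset\RU_i$ is reduced. A reduced scheme can contain non-reduced subschemes whose reduction is normal, e.g.\ $V(x^2,y)\subset\BA^2_k$. What the statement actually requires is that $(XBX^t,\,X^tBX,\,\wedge^{r+1}X)$ is a radical, indeed prime, ideal of $k[X]$. Without this you get neither the scheme-theoretic equality $S_r\cap\RU_i=\BA^{a_i}_k\times C_{r+1}(i,0)$ nor the integrality of $C_{r+1}(i,0)$. Both are essential later:
\begin{itemize}
\item the centres blown up in Theorem~\ref{thm:main} are the reduced $S_r$;
\item Lemma~\ref{lem:strict-transforms} identifies strict transforms with $C_r(d-1,\ell+1)$ by an ``integral of the same dimension'' argument that rests on Proposition~\ref{prop-dimCd}, hence on this integrality.
\end{itemize}
Your fallback (``standard monomial theory'') is only a pointer. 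A real proof would need generic reducedness at a rank-$r$ point (a Jacobian computation) plus an $(S_1)$ or Cohen--Macaulay statement for these isotropic determinantal rings, which is genuine work. You should instead cite directly the results of \cite{YZZss} that establish this scheme-theoretic description, as the paper does.

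The rest of your sketch is sound:
\begin{itemize}
\item Your count $2\dim\mathrm{IGr}(r,2i)+r^2$ does give $r(4i-2r+1)$, resp.\ $r(4i-2r-1)$. Your description also shows that the rank-$\le r$ locus is topologically irreducible, being the closure of the rank-exactly-$r$ locus.
\item The orthogonal case is not an extra obstacle. Since $r\le i-1$, $\mathrm{OG}(r,2i)$ is connected. With $T_0=0$, every spin relation in $\CJ^\delta_i$ is a combination of $i\times i$ or $(i+1)\times(i+1)$ minors, so it already lies in $\wedge^{r+1}X$. This is why the ring in the statement carries no spin conditions.
\item Your assertion that each rank stratum is a single $\CG_{i,k}$-orbit is stated without argument, but this proposition does not need it. It suffices that the rank of $\CF_i\to\CF_{-i}$ is $\CG_i$-invariant and the rank-$\le r$ loci are irreducible. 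There are only finitely many orbits, so each such locus is then the closure of its generic orbit.
\end{itemize}
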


{
\begin{proof}
    For $G=\GSp_{2g}$, see \cite[Propositions 8.6, 8.8]{YZZss}. For $G=\GSO_{2g}$, see \cite[Theorem 4.6 (1), Proposition 4.7 (1)]{YZZss}.
\end{proof}
}

\begin{thm} \label{thm:affine}
{
Let $d,\ell\geq0$, and let {$\CZ$} be $\CX(d,\ell)$ or
$\CY^\delta(d,\ell)$. Let $\CZ^\reg$ denote the resulting scheme of the successive blow-ups of $C_1(d,\ell),\ldots,C_d(d,\ell)$. Then $\CZ^\reg$ is covered by covered by affine open subschemes of the form
\begin{equation}\label{eq:affine-resolution}
 \Spec\frac{\CO[x_0,x_1,\ldots,x_N]}
 {\bigl(\pi_0-x_0(x_1\cdots x_s)^2\bigr)},\qquad
 s=d+\ell.
\end{equation}
Here $N=d(2d+1)+\ell$ for $\CX(d,\ell)$ and
$N=d(2d-1)+\ell$ for $\CY^\delta(d,\ell)$.
In particular, it is regular and its special fiber has simple normal
crossings, with component multiplicities at most two.
}
\end{thm}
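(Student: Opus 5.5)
We argue by induction on $d$, with $\ell$ arbitrary. For $d=0$ there is nothing to blow up, and $\CZ=\CX(0,\ell)$ or $\CY^+(0,\ell)$ is already of the form \eqref{eq:affine-resolution} with $s=\ell$ and $N=\ell$, by \eqref{eq:terminal-family}; the case $\CY^-(0,\ell)=\emptyset$ is vacuous.

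The heart of the argument is a one-step statement, which we establish first. For $d\geq1$, the blow-up $\widetilde{\CZ}$ of $\CZ=\CX(d,\ell)$ (resp.\ $\CY^\delta(d,\ell)$) along $C_1(d,\ell)=V(T_0,X)$ is covered by affine opens isomorphic to $\BA^{b_d}_\CO\times_\CO\CX(d-1,\ell+1)$ (resp.\ $\BA^{b_d}_\CO\times_\CO\CY^{\delta'}(d-1,\ell+1)$ for a suitable sign $\delta'$). Moreover, under these isomorphisms the strict transform of $C_{r}(d,\ell)$ corresponds to $\BA^{b_d}\times C_{r-1}(d-1,\ell+1)$ for $2\leq r\leq d$.

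Granting this, the theorem follows by induction. The successive blow-ups along the strict transforms of $C_2,\dots,C_d$ are computed chartwise, since blow-up commutes with flat base change and open restriction. The product with an affine space is harmless. Applying the inductive hypothesis to $\CX(d-1,\ell+1)$, which has $s=(d-1)+(\ell+1)=d+\ell$, gives the charts. The count $N$ is checked as
\[
b_d+(d-1)(2d\mp1)+\ell+1=d(2d\pm1)+\ell .
\]
Regularity and the simple normal crossings conclusion with multiplicities $\leq2$ are then read off directly from \eqref{eq:affine-resolution}.

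To prove the one-step statement, we cover the blow-up by the standard charts. The ideal $(T_0,X)$ is generated by $T_0$ and the entries $x_{a,b}$. Using the $G$-equivariance (conjugation of $X$ by symplectic/orthogonal matrices together with row and column symmetries preserving $B_d^\pm$), we reduce to two charts: the $T_0$-chart and the $x_{1,1}$-chart (or $x_{1,2d}$, whichever is available up to symmetry).

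In the $T_0$-chart we write $X=T_0X'$. The quadratic relations give $X'BX'^t=-\varepsilon T_0^{-1}B$, which is only possible if $T_0$ is a unit. Since $\pi_0=Q_\ell^2T_0$, this chart lies in the locus where the special fiber is empty modulo the exceptional divisor. We therefore expect the $T_0$-chart to be covered by the other charts, or to be directly of the required form after the change of variables used in the proof of Proposition \ref{prop:integrality}.

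In the $x_{1,1}$-chart we set $u=x_{1,1}$, $X=uX'$ with $x'_{1,1}=1$, and $T_0=uT_0'$. Then $\pi_0=Q_\ell^2uT_0'$, and the quadratic equations become $X'BX'^t=-\varepsilon (T_0'/u)B$. Row and column reduction using the pivot $1$ produces a $(2d-2)\times(2d-2)$ block $Y$. The first row and column entries, modulo the relations, give $b_d$ free variables, namely $4d-2$ in the symplectic case, with two fewer in the orthogonal case because of the extra relations forced by the symmetric form and $\CJ$. We expect $Y$ to satisfy $YBY^t=-\varepsilon T''B$ with $T''$ such that $T_0'=u\,T''$. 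This yields $\pi_0=T''(Q_\ell u)^2$, so that $T_{\ell+1}:=u$, giving exactly $\CX(d-1,\ell+1)$. For the orthogonal family we must also check that the spin conditions $\CJ^\delta_d$ restrict to $\CJ^{\delta'}_{d-1}(T'')$ on the reduced block, via Laplace expansion along the pivot row and column.

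The identification is first verified as a surjection from the chart ring onto the ring of $\BA^{b_d}\times\CX(d-1,\ell+1)$. Both are integral of the same dimension by Proposition \ref{prop:integrality}, and the blow-up of an integral scheme is integral, so the surjection is an isomorphism. The strict transform claim follows because minors of $X$ of size $r$ become $u^r$ times minors of $X'$, which in turn equal $u^r$ times the $(r-1)$-minors of $Y$ together with lower-order terms.

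\textbf{Main obstacle.} The hardest step is this chart computation. Specifically, we must show that after clearing the pivot the new block satisfies the quadratic relations with a parameter $T''$ such that $T_0'$ is divisible by $u$, which is what produces the square $u^2$. We also need the spin conditions to descend in the orthogonal case. We would try first to compute the Schur complement $Y=X'_{\rm rest}-(\text{col})(\text{row})$ and use the $(1,2d)$-entries of $X'BX'^t$ to extract the factor $u$, mimicking \cite[\S6, \S9]{YZZss} but without adjoining $\sqrt{-\pi_0}$.
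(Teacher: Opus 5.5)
Your outline is the paper's proof: induction on $d$ through the one-step statement (Proposition~\ref{prop:matrix-charts} and Lemma~\ref{lem:strict-transforms}), reduction to $D_+(y_{1,1})$, and the Schur complement (your $Y$, the paper's $Z=C-\bfb\bfa^t$). The extra factor of $x_{1,1}$ is read off from the $(1,2d)$-entry, so that $T_0=x_{1,1}^2\eta$ and $T_{\ell+1}=x_{1,1}$. The spin conditions descend via minors through the pivot, and the ``integral of the same dimension'' argument finishes. The steps you leave as expectations are what the paper computes, but several points need repair.

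\textbf{The scalar chart.} Your second option cannot work. The later centers lie in $V(T_0)$ and $T_0$ is a unit on this chart, so it survives unchanged into $\CZ^{\mathrm{reg}}$ with only $\ell$ squared factors instead of $d+\ell$. Your first option is the right one. Taking determinants in the saturated relation $T_0X'BX'^t=-\varepsilon B$ shows that $\det X'$ is also a unit. Hence near each point some $x'_{a,b}$ is a unit, $x_{a,b}=T_0x'_{a,b}$ generates $I$ there, and the point lies in $D_+(y_{a,b})$ (Lemma~\ref{lem-coverscaler}).

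\textbf{Reduction to $D_+(y_{1,1})$.} This uses independent row and column permutations $X\mapsto P_aXP_b^t$ (with a sign twist in the symplectic case), not conjugation. It reaches every $(a,b)$ and replaces $\delta$ by $\delta\tau_a\tau_b$. In the orthogonal case, the two lost free variables are the corner entries $y_{1,2d}$ and $y_{2d,1}$. The $(1,1)$-entries of the form equations force them to equal $-\tfrac12\bfa^tK\bfa$ and $-\tfrac12\bfb^tK\bfb$, which uses $p>2$; $\CJ$ plays no role there.

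\textbf{Direction of the comparison map.} The paper goes the other way from you. It checks that the target's defining relations (the form equations for $Z$ with parameter $\eta$, $\pi_0=Q_{\ell+1}^2\eta$, and $\CJ^{\delta'}_{d-1}(\eta)$) hold in the chart ring. This yields a closed immersion of the chart into the target. A surjection from the chart ring onto the target, as you propose, would require checking every original relation on the reconstructed $X$. That includes spin conditions for minors avoiding the pivot row or column, which the Sylvester identity does not give.

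\textbf{Strict transforms.} ``$(r-1)$-minors plus lower-order terms'' is not enough. The strict transform is a closure (a saturation with respect to $x_{1,1}$), not what you get by dividing generators by powers of $x_{1,1}$. The paper combines a containment from the Sylvester identity with integrality and equality of dimensions (Proposition~\ref{prop-dimCd}). Alternatively, with pivot $1$, the ideal of $r$-minors of $X'$ equals the ideal of $(r-1)$-minors of the full $(2d-1)\times(2d-1)$ Schur complement. Under invertible row and column operations, that matrix is equivalent to $Z\oplus(\pm\eta)$. Hence off $V(x_{1,1})$ the total transform is $V(\eta,\wedge^{r-1}Z)$. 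Since $x_{1,1}=T_{\ell+1}$ is a free polynomial variable on $\BA^{b_d}\times C_{r-1}(d-1,\ell+1)$, the closure is all of it.

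\textbf{The count.} It should read $b_d+(d-1)(2d-2\pm1)+\ell+1=d(2d\pm1)+\ell$. Your middle term $(d-1)(2d\mp1)$ is correct for $\CX$, but for $\CY^\delta$ it gives $(d-1)(2d+1)$ instead of $(d-1)(2d-3)$.
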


We will prove Theorem~\ref{thm:affine} in \S \ref{sec:calculation}.
We explain how it implies the main theorem.

\begin{proof}[Proof of Theorem~\ref{thm:main}]
As discussed at the beginning of \S \ref{sec:preliminaries}, we may assume $\RM=\RM_i$ for $0\leq i\leq \lfloor g/2\rfloor$. For $i=0$, the local model is smooth and the assertion is immediate.

Let $i>0$. {Since the
Schubert varieties $S_r$ are $\CG_i$-stable, by construction, the successive blow-up $\RM^\reg\ra \RM$ is a $\CG_i$-equivariant and projective birational morphism with isomorphic generic fibers. }

By Proposition~\ref{prop:local-input}, on each chart $\RU_i$ the blow-up
sequence in Theorem \ref{thm:main} is the product of $\BA_{\CO}^{a_i}$ with the sequence in
Theorem~\ref{thm:affine} for $(d,\ell)=(i,0)$. Theorem~\ref{thm:affine}
therefore gives the affine cover \eqref{eq:main-charts}, and shows that
the resulting scheme is regular with special fiber a simple normal
crossings divisor whose components have multiplicity one or two.
\end{proof}

{
\begin{prop} \label{prop-dimCd}
    Suppose $1\leq r\leq d$. The $k$-scheme $C_r(d,\ell)$ is integral of dimension $(r-1)(4d-2r+3)+\ell$ (resp. $(r-1)(4d-2r+1)+\ell$) for $\CX(d,\ell)$ (resp. $\CY^\delta(d,\ell)$).
\end{prop}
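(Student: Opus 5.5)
Before any geometry, I reduce the statement to Proposition~\ref{prop:local-input}. By definition, $C_r(d,\ell)=V(T_0,\wedge^rX)$ in $\CX(d,\ell)$ or $\CY^\delta(d,\ell)$. Setting $T_0=0$ forces $\pi_0=Q_\ell^2T_0=0$, so the ring becomes
\[
 k[T_1,\ldots,T_\ell]\otimes_k \frac{k[X]}{(XB_d^\pm X^t,\ X^tB_d^\pm X,\ \wedge^rX)+\CJ_d^\delta(0)}.
\]
In the $\CY$ case the spin ideal $\CJ_d^\delta(0)$ is included. In the $\CX$ case there is no $\CJ$, and the signs in front of $T_0$ disappear. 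The equation $\pi_0-Q_\ell^2T_0$ becomes trivial, so the $T_1,\ldots,T_\ell$ are free variables. Hence
\[
 C_r(d,\ell)\simeq \BA_k^\ell\times_k C_r(d,0),
\]
and integrality and the dimension count reduce to the case $\ell=0$, with the dimension shifted by $\ell$.

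Next, I identify $C_r(d,0)$ with the scheme in Proposition~\ref{prop:local-input} for $i=d$ and $r+1$ replaced by $r$. That proposition states that for $1\le r'+1\le i$ the scheme
\[
 C_{r'+1}(i,0)=\Spec k[X]/(XB_i^\pm X^t,\ X^tB_i^\pm X,\ \wedge^{r'+1}X)
\]
is integral of dimension $r'(4i-2r'+1)$ in the symplectic case and $r'(4i-2r'-1)$ in the orthogonal case. Substituting $r'=r-1$ and $i=d$ gives $(r-1)(4d-2r+3)$ and $(r-1)(4d-2r+1)$, which are exactly the claimed numbers before adding $\ell$. Integrality of $\BA^\ell_k\times_k C_r(d,0)$ then follows because $k$ is a field, so a polynomial ring over an integral domain is again integral.

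The one subtle point is that Proposition~\ref{prop:local-input} is stated only for sizes $d=i$ arising from actual local models, that is, $i\le\lfloor g/2\rfloor$. This is harmless, since for any $d$ we may choose $g\ge 2d$ and take $i=d$. For $\GSO$ we also need $i\neq1$, so the case $d=1$ of $\CY^\delta$ must be checked by hand. There $X$ is $2\times2$, $r=1$, and $C_1=V(T_0,X)$ is just $\BA^\ell_k$ (reduced point times affine space), of dimension $0+\ell$, as required.

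The step I expect to be the main obstacle is matching the presentation in Proposition~\ref{prop:local-input} with $C_r(d,0)$ in the orthogonal case. The formula for $C_{r+1}(i,0)$ displayed there omits the spin conditions, so I must check that it is literally the scheme $V(\pi_0,\wedge^{r+1}X)$ on the chart $\CY^\delta(i,0)$. If the spin relations at $T_0=0$ were not implied, I would instead invoke the cited results \cite[Theorem 4.6, Proposition 4.7]{YZZss} directly for the chart including $\CJ^\delta_i(0)$. At $T_0=0$, the $(d+1)$-minor spin relations say those minors vanish, and for $r\le d$ they already lie in $\wedge^rX$. The $d$-minor relations are the ones that need care when $r=d$; for $r<d$ they lie in $\wedge^rX$ and are automatic.
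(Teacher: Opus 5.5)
Your proposal is correct and follows the paper's argument: you set $T_0=0$, split off $\BA^\ell_k$, and apply Proposition~\ref{prop:local-input} with $r-1$ in place of $r$ and $i=d$. Your extra checks (taking $g\ge 2d$, and treating the orthogonal case $d=1$ by hand) fill in points the paper leaves implicit. The spin-condition worry at $r=d$ is moot: every generator of $\CJ_d^\delta(0)$ is a linear combination of $d\times d$ or $(d+1)\times(d+1)$ minors, so it lies in $\wedge^rX$ for every $r\le d$.
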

\begin{proof}
    Note that the coordinate ring is \begin{flalign*}
        \frac{k[X,T_1,\ldots,T_\ell]}{(XBX^t, X^tBX,\wedge^r X)}.
    \end{flalign*}
    We have $C_r(d,\ell)\simeq C_r(d,0)\times_k\BA_k^{\ell}$.
    By Proposition \ref{prop:local-input}, we obtain the proposition. 
\end{proof}
}

\section{{Successive blow-ups and the proof of Theorem \ref{thm:affine}} }\label{sec:calculation}

\subsection{The first blow-up}

Fix $d\geq 0$ and $\ell\geq0$, and let $R$ be the coordinate ring of either
$\CX(d,\ell)$ or $\CY^\delta(d,\ell)$. Put $I\coloneqq (X,T_0)$. The
surjection of graded $R$-algebras
\[
 R[(y_{a,b})_{1\leq a,b\leq 2d},\alpha]\longrightarrow
 \bigoplus_{r\geq0}I^r,\qquad
 Y=(y_{a,b})\longmapsto X=(x_{a,b}),\quad\alpha\longmapsto T_0,
\]
where $x_{a,b}$, $T_0\in I$ have degree one, realizes the blow-up along $I$
as a closed subscheme
\[
 \Bl_I(\Spec R)=\Proj\Bigl(\bigoplus_{r\geq0}I^r\Bigr)
 \ \lhook\joinrel\longrightarrow\ \Proj R[y_{a,b},\alpha].
\]
The blow-up $\Bl_I(\Spec R)$ has two types of standard affine charts: the \textit{scalar chart} $D_+(\alpha)$ and
the \textit{matrix charts} $D_+(y_{a,b})$. {We first show that the matrix charts cover the blow-up.} 
{
Note that we have (cf. \cite[0G8S]{stacks-project}) 
\begin{equation}\label{eqDa}
\begin{aligned}
    D_+(\alpha)
    &= \Spec R [\frac{I}{T_0}] \\
    &\simeq \Spec \frac{R[Y,\alpha]}
    {(\alpha-1,\,
    f\mid \text{$\exists n\geq 0,\ T_0^n f\in
    (x_{i,j}-y_{i,j}T_0)\text{ for }1\leq i,j\leq 2d$})}.
\end{aligned}
\end{equation}
and 
\begin{equation} \label{eqY}
\begin{aligned}
    D_+(y_{a,b})
    &= \Spec R [\frac{I}{x_{a,b}}] \\
    &\simeq
    \Spec \frac{R[Y,\alpha]}
    {\left(
        y_{a,b}-1,\,
        f\ \middle|\
        \begin{gathered}
            \exists\, n\geq 0,\;
            x_{a,b}^n f\in
            (x_{i,j}-y_{i,j}x_{a,b},\,T_0-\alpha x_{a,b}),\\
            \text{for }1\leq i,j\leq 2d
        \end{gathered}
    \right)}.
\end{aligned}
\end{equation}

Over the scalar chart $D_+(\alpha)$, by \eqref{eqDa} we have the relation \begin{flalign*}
    T_0YB_d^\varepsilon Y^t=-\varepsilon B_d^\varepsilon.
\end{flalign*}
In particular, $T_0$ and $Y$ is invertible. Thus, locally, at least one $y_{a,b}$ is invertible. We deduce the following.  
\begin{lem}\label{lem-coverscaler}
    The scalar chart $D_+(\alpha)$ is covered by matrix charts $D_+(y_{a,b})$.
\end{lem}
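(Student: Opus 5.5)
The plan is to show that on $D_+(\alpha)$ the matrix $Y$ is invertible, and then use the general fact that a point of $D_+(\alpha)$ where some $y_{a,b}$ is a unit lies in $D_+(y_{a,b})$.

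First we derive the relation $T_0YB^\varepsilon_dY^t=-\varepsilon B^\varepsilon_d$ on $D_+(\alpha)=\Spec R[I/T_0]$. In $R[I/T_0]$ we have $X=T_0Y$, so the defining relation $XBX^t+\varepsilon T_0B=0$ becomes
\[
T_0\bigl(T_0YBY^t+\varepsilon B\bigr)=0 .
\]
The element $T_0$ is a nonzerodivisor in the blow-up algebra. Indeed, $R$ is an integral domain by Proposition~\ref{prop:integrality}, so $R[I/T_0]\subset \mathrm{Frac}(R)$ is a domain, and $T_0\neq 0$ in $R$ because $\pi_0=Q_\ell^2T_0$. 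Equivalently, this is the saturation in \eqref{eqDa}. Cancelling $T_0$ gives the relation. Since $B=B_d^\varepsilon$ is invertible over $\CO$, taking determinants yields
\[
T_0^{2d}\det(Y)^2=\det(-\varepsilon B)\in\CO^\times .
\]
Hence $T_0$ and $\det Y$ are units in $R[I/T_0]$. If $d=0$, the ideal is $I=(T_0)$ and the blow-up is an isomorphism, so we may assume $d\geq1$.

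Next, let $\mathfrak p$ be any point of $D_+(\alpha)$. Since $\det Y$ is a unit, $Y$ is not identically zero modulo $\mathfrak p$, so some entry $y_{a,b}$ is a unit in the local ring at $\mathfrak p$. In terms of $\Proj$, the point $\mathfrak p$ is a homogeneous prime not containing $\alpha$. The degree-zero fraction $y_{a,b}/\alpha$ does not lie in $\mathfrak p$, so the prime does not contain $y_{a,b}$ either, and hence $\mathfrak p\in D_+(y_{a,b})$.

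Taking the union over all $(a,b)$ gives $D_+(\alpha)\subset\bigcup_{a,b}D_+(y_{a,b})$. As a consequence, the matrix charts cover the whole blow-up. The only point needing care is the cancellation of $T_0$, and that follows from the integrality in Proposition~\ref{prop:integrality}.
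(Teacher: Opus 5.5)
Your proof is correct and follows the paper's argument: you derive $T_0YB_d^\varepsilon Y^t=-\varepsilon B_d^\varepsilon$ on $D_+(\alpha)$ by cancelling $T_0$ (via the saturation, or because $T_0$ is a nonzerodivisor in $R[I/T_0]$), conclude that $T_0$ and $Y$ are invertible, and deduce that at each point some $y_{a,b}$ is a unit, so the point lies in $D_+(y_{a,b})$. One harmless slip: taking determinants gives $T_0^{2d}\det(Y)^2\det(B_d^\varepsilon)=\det(-\varepsilon B_d^\varepsilon)=\det(B_d^\varepsilon)$, i.e.\ $T_0^{2d}\det(Y)^2=1$, rather than your displayed identity, but in either form $T_0$ and $\det Y$ are units.
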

}


For $1\leq a\leq2d$, put
\[
 \tau_a\coloneqq\begin{cases}1,&a\leq d,\\-1,&a>d.\end{cases}
\]

\begin{prop}\label{prop:matrix-charts}
For $1\leq a,b\leq2d$, the matrix charts are
\begin{align}
 D_+(y_{a,b})&\simeq
 \CX(d-1,\ell+1)\times_{\CO}\BA_{\CO}^{4d-2}
 &&\text{for }\CX(d,\ell),\label{eq:symp-matrix-chart}\\
 D_+(y_{a,b})&\simeq
 \CY^{\delta\tau_a\tau_b}(d-1,\ell+1)
 \times_{\CO}\BA_{\CO}^{4d-4}
 &&\text{for }\CY^\delta(d,\ell).\label{eq:orth-matrix-chart}
\end{align}
\end{prop}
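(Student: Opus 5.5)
We will work on the chart $D_+(y_{a,b})$ directly, using the description \eqref{eqY}. Put $t\coloneqq x_{a,b}$. On this chart $X=tY$ with $y_{a,b}=1$, and $T_0=\alpha t$. Substituting into $XBX^t+\varepsilon T_0B=0$ and $X^tBX+\varepsilon T_0B=0$ gives $t^2YBY^t+\varepsilon\alpha tB=0$. Since $R$ is integral by Proposition~\ref{prop:integrality}, $t$ is a nonzerodivisor on the blow-up. Dividing by $t$, the strict transform satisfies
\[
tYBY^t+\varepsilon\alpha B=0,\qquad tY^tBY+\varepsilon\alpha B=0,\qquad \pi_0=Q_\ell^2\alpha t .
\]
The spin conditions in $\CJ^\delta_d(T_0)$ transform in the same way, after dividing $d\times d$ minors by $t^d$ and $(d+1)\times(d+1)$ minors by $t^{d+1}$.

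First we reduce to a normal form for $(a,b)$. Permuting the basis by signed permutations preserving $B_d^\pm$ up to sign lets us move the pivot. These act on both sides, so they respect the two quadratic relations. In the orthogonal case they change the determinant sign; this is exactly the twist $\delta\mapsto\delta\tau_a\tau_b$ in \eqref{eq:orth-matrix-chart}. We will therefore assume $a=b=1$, checking that the spin conditions transform with the factor $\tau_a\tau_b$ (or take $a=b=2d$ in one case to see both signs).

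Next we clear the first row and column. Since $y_{1,1}=1$, we can multiply $Y$ on the left and right by unipotent elements of the (similitude) group, e.g.\ $P=I+u$ and $P'=I+u'$, built from $y_{1,j}$ and $y_{i,1}$. These are chosen so that $PYP'$ has first row and column $(1,0,\dots,0)$, and, by the form compatibility, last row and column $(0,\dots,0,\ast)$. The entries $y_{1,j}, y_{i,1}$ ($j,i\neq1$), together with the coordinates needed to complete $P,P'$ to group elements, give the free affine coordinates. The expected count is $4d-2$ in the symplectic case and $4d-4$ in the orthogonal case, where isotropy forces $y_{1,2d}$ and $y_{2d,1}$ to be dependent.

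Now read off the $(1,2d)$ entry of $tYBY^t=-\varepsilon\alpha B$ for the normalized matrix. It gives $t\,y_{2d,2d}'=\pm\alpha$. So $\alpha=tT_0'$ with $T_0'\coloneqq\mp y'_{2d,2d}$, and hence $\pi_0=Q_\ell^2t^2T_0'$. Set $T_{\ell+1}\coloneqq t$. The middle $(2d-2)\times(2d-2)$ block $X'$ then satisfies $X'B_{d-1}X'^t+\varepsilon T_0'B_{d-1}=0$, its transpose relation, and (orthogonal case) the spin conditions with $T_0'$. These follow from Laplace expansion along the cleared row and column. This produces a closed immersion of $D_+(y_{1,1})$ into $\CX(d-1,\ell+1)\times\BA^{4d-2}$, respectively $\CY^{\delta\tau_a\tau_b}(d-1,\ell+1)\times\BA^{4d-4}$, together with an inverse morphism sending $(X',T_0',T_{\ell+1},\text{coords})$ back to $(X,T_0)$.

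The main obstacle is showing this map is an isomorphism onto the strict transform, i.e.\ that no extra $t$-torsion relations from \eqref{eqY} remain. We will handle it by dimension and integrality. The target is integral and flat of dimension $(d-1)(2d\pm1)+\ell+2+(4d-2\text{ or }4d-4)$, which equals $d(2d\pm1)+\ell+1=\dim R$ by Proposition~\ref{prop:integrality}. The inverse map lands in $\Spec R[I/x_{a,b}]$, and the two compositions agree after inverting $t$ (both are the generic fibre $\Sp$ or $\mathrm O\times\mathbb G_m^{\ell}$). Since the target is integral and hence $t$-torsion-free, the composite is the identity. This gives the stated isomorphisms.
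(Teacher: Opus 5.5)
Your outline follows the paper's proof. The reduction to $D_+(y_{1,1})$, division by the nonzerodivisor $t=x_{1,1}$, reading $\alpha=t\,T_0'$ off the $(1,2d)$-entry, $\pi_0=Q_{\ell+1}^2T_0'$ with $T_{\ell+1}=t$, and identifying the remaining relations with those of the $(d-1,\ell+1)$ scheme all match. Your unipotent normalization $Y\mapsto PYP'$ produces exactly the paper's Schur complement $Z=C-\mathbf b\mathbf a^t$ as middle block, and your $T_0'$ is the paper's $\eta$.

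The spin step needs one more sentence. The spin relations are known for $Y$, not for $PYP'$, and you have not shown that $\CJ$ is stable under $P,P'$. With $P$ lower and $P'$ upper unipotent as you intend, $P$ fixes row $1$ and only adds multiples of row $1$ to rows $2,\ldots,2d-1$. Likewise $P'$ fixes column $1$ and only adds multiples of column $1$ to columns $2,\ldots,2d-1$. Hence $[V:V'](PYP')=[V:V'](Y)$ whenever $1\in V,V'$ and $2d\notin V,V'$. This covers $V=U\cup\{1\}$ and $V^\perp=U^\perp\cup\{1\}$. Combined with your Laplace expansion, this is exactly the Sylvester identity $[U\cup\{1\}:U'\cup\{1\}](Y)=[U:U'](Z)$ the paper uses, together with $\operatorname{sgn}(\sigma_V)=\operatorname{sgn}(\sigma_U)$.

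The genuine gap is in your last paragraph. As written, your conclusion depends on an inverse morphism from the target to $D_+(y_{1,1})$, but you never show it is well defined. By the universal property of $R[I/x_{1,1}]$, defining it amounts to checking that $X=t\,P^{-1}\operatorname{diag}(1,X',\mp T_0')P'^{-1}$ and $T_0=t^2T_0'$ satisfy all relations of $R$. In the orthogonal case this means the whole ideal $\CJ_d^\delta(T_0)$, including minors not passing through the pivot, and your proposal does not address them. Also, inverting $t$ does not give the generic fibre, since $\pi_0=t^2Q_\ell^2T_0'$.

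You do not need the inverse at all. $D_+(y_{1,1})=\Spec R[I/x_{1,1}]$ is a domain (a subring of $\operatorname{Frac}R$) with the same dimension as $R$. Your closed immersion sends it into a scheme that is integral of that same dimension by Proposition~\ref{prop:integrality}, so the immersion is an isomorphism. This is exactly how the paper concludes.

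A small slip: the $(d-1,\ell+1)$ scheme has dimension $(d-1)(2d-1)+\ell+2$, resp.\ $(d-1)(2d-3)+\ell+2$, not $(d-1)(2d\pm1)+\ell+2$. The totals still equal $\dim R$.
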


\begin{proof} 
We first move the chosen entry to position $(1,1)$. Let $P_a$ and
$P_b$ be the permutation matrices reordering the row and column
indices as $(a,\mathcal{I},a^*)$ and $(b,\mathcal{J},b^*)$, respectively, where
\[
 \mathcal{I}=\{1,\ldots,2d\}\setminus\{a,a^*\},\qquad
 \mathcal{J}=\{1,\ldots,2d\}\setminus\{b,b^*\}
\]
are in increasing order. Then
$\det(P_a)=\tau_a$ and $\det(P_b)=\tau_b$.

In the orthogonal case $\CY^\delta(d,\ell)$, write $D_+^\delta(y_{a,b})$ for the matrix chart $D_+(y_{a,b})$ in $\CY^\delta(d,\ell)$. Note that $P_a$ and $P_b$ preserve $B_d^+$ (that is, $MB_d^+M^t=B_d^+$ for $M=P_a$ or $P_b$). Then one can directly check that the transformation sending $X$ to $P_aXP_b^t$ induces an isomorphism $D_+^\delta(y_{a,b})=D_+^{\delta\tau_a\tau_b}(y_{1,1})$.
In the symplectic case $\CX(d,\ell)$, put
\[
 S_a\coloneqq \operatorname{diag}(1,\ldots,1,\tau_a)P_a,\qquad
 S_b\coloneqq \operatorname{diag}(1,\ldots,1,\tau_b)P_b.
\]
These matrices preserve $B_d^-$, and we replace $X$ by $S_aXS_b^t$. Then we have an isomorphism $D_+(y_{a,b})\simeq D_+(y_{1,1})$. 

Hence, we may reduce to studying the normalized chart $D_+(y_{1,1})$.
By \eqref{eqY}, over $D_+(y_{1,1})$, we have that
\begin{equation*}
 y_{1,1}=1,\qquad X=x_{1,1}Y,\qquad T_0=x_{1,1}\alpha,\qquad
 Y=\begin{pmatrix}
       1&\mathbf a^t&u\\
       \mathbf b&C&\mathbf b^*\\
       v&(\mathbf a^*)^t&c
    \end{pmatrix},
\end{equation*}
where the vectors
\[
\bfa:=(y_{1,2} \dots   y_{1,2d-1})^t,\quad
\bfa^*:=(y_{2d,2} \dots  y_{2d,2d-1})^t,
\]
and
\[
\bfb:=(y_{2,1} \dots y_{2d-1,1})^t,\quad
\bfb^*:=(y_{2,2d} \dots  y_{2d-1,2d})^t,
\]
the middle block $C$ has size $2d-2$. In particular, we obtain

\begin{equation}\label{eq:pivot-form-equations}
 x_{1,1}YB_d^\varepsilon Y^t=-\varepsilon\alpha B_d^\varepsilon,
 \qquad
 x_{1,1}Y^tB_d^\varepsilon Y=-\varepsilon\alpha B_d^\varepsilon,
\end{equation}
together with
\begin{equation}\label{eq:base-pivot}
    \pi_0= x_{1,1}\alpha Q_\ell^2.
\end{equation}
Set $K\coloneqq B_{d-1}^\varepsilon$ and define
\begin{equation}\label{eq:schur-complement}
    Z\coloneqq C-\mathbf b\mathbf a^t.
\end{equation}

\smallskip\noindent
\emph{Alternating form.} 

{In this case, we have $K=B_{d-1}^-$}. For $2\leq j\leq 2d-1$, the $(1,j)$-entries of the
second and first equations in \eqref{eq:pivot-form-equations},
respectively give
\[
 \mathbf a^*=v\mathbf a+C^tK\mathbf b,
 \qquad
 \mathbf b^*=u\mathbf b+CK\mathbf a.
\]
These identities eliminate the starred vectors.
The $(1,2d)$-entry of the first equation in
\eqref{eq:pivot-form-equations} is
\[
 x_{1,1}\bigl(c-uv+\mathbf a^tK\mathbf a^*\bigr)=\alpha.
\]
Substituting the expression for $\mathbf a^*$ and using
$\mathbf a^tK\mathbf a=0$, we obtain $\alpha=x_{1,1}\eta$, where
\begin{equation*}\label{eq:symp-eta}
 \eta\coloneqq c-uv+\mathbf a^tKC^tK\mathbf b.
\end{equation*}
The remaining matrix equations, after saturation, are
\begin{equation}\label{eq:symp-reduced}
 ZKZ^t=\eta K,
 \qquad Z^tKZ=\eta K.
\end{equation}
Indeed, after substituting the expressions for the starred
vectors and $\alpha=x_{1,1}\eta$, the middle block of the first
equation in \eqref{eq:pivot-form-equations} gives, after saturation,
\[
 \eta K
 =CKC^t-CK\mathbf a\mathbf b^t-\mathbf b\mathbf a^tKC^t
 =(C-\mathbf b\mathbf a^t)K(C^t-\mathbf a\mathbf b^t)
 =ZKZ^t,
\]
where we use $\mathbf a^tK\mathbf a=0$.
The middle block of the second equation similarly gives
$Z^tKZ=\eta K$, and the other entries impose no additional relations. The variables $\mathbf a,\mathbf b,u,v$ are free, in number $4d-2$.
After substituting $\alpha=x_{1,1}\eta$, equations
\eqref{eq:base-pivot} and \eqref{eq:symp-reduced} are precisely
the equations for $\CX(d-1,\ell+1)$, with $T_{\ell+1}=x_{1,1}$ and $T_0'=\eta$.

\smallskip\noindent
\emph{Symmetric form.} 

{In this case, we have $K=B_{d-1}^+=H_{2d-2}$}. The $(1,1)$-entries of the first and second equations in
\eqref{eq:pivot-form-equations}, respectively then give
\[
 2u+\mathbf a^tK\mathbf a=0,
 \qquad
 2v+\mathbf b^tK\mathbf b=0.
\]
Since $2$ is invertible, we obtain
\begin{equation*}
 u=-\tfrac12\mathbf a^tK\mathbf a,
 \qquad
 v=-\tfrac12\mathbf b^tK\mathbf b.
\end{equation*}
For $2\leq j\leq 2d-1$, the $(1,j)$-entries of the
second and first equations in \eqref{eq:pivot-form-equations},
respectively then give
\[
 \mathbf a^*=-C^tK\mathbf b
              +\tfrac12(\mathbf b^tK\mathbf b)\mathbf a, \quad \mathbf b^*=-CK\mathbf a
              +\tfrac12(\mathbf a^tK\mathbf a)\mathbf b.
\]
These identities eliminate the starred vectors.
The $(1,2d)$-entry of the first equation in
\eqref{eq:pivot-form-equations} is
\[
 x_{1,1}\bigl(c+uv+\mathbf a^tK\mathbf a^*\bigr)=-\alpha.
\]
Substituting the preceding expressions, we obtain
$\alpha=x_{1,1}\eta$, where
\begin{equation*}
 \eta\coloneqq -c-\tfrac34(\mathbf a^tK\mathbf a)(\mathbf b^tK\mathbf b)
       +\mathbf a^tKC^tK\mathbf b.
\end{equation*}

With \eqref{eq:schur-complement}, the remaining form equations become
\begin{equation*}
    ZKZ^t+\eta K=0,
    \qquad Z^tKZ+\eta K=0. 
\end{equation*}
Indeed, after substituting the expressions for the starred
vectors and $\alpha=x_{1,1}\eta$, the middle block of the first
equation in \eqref{eq:pivot-form-equations} gives, after saturation,
\[
 \begin{aligned}
 -\eta K
 &=CKC^t-CK\mathbf a\mathbf b^t-\mathbf b\mathbf a^tKC^t
   +(\mathbf a^tK\mathbf a)\mathbf b\mathbf b^t\\
 &=(C-\mathbf b\mathbf a^t)K(C^t-\mathbf a\mathbf b^t)
 =ZKZ^t.
 \end{aligned}
\]
The middle block of the second equation similarly gives
$Z^tKZ=-\eta K$, and the other entries impose no additional
form relations. The free variables are now just $\mathbf a,\mathbf b$, in number $4d-4$.

{For the spin condition, 
let \(U,U'\subset\{2,\ldots,2d-1\}\) have the same cardinality, and put
\[
V:=U\cup\{1\},\qquad V':=U'\cup\{1\}.
\]
When \(U,U'\) occur in minors and signatures of \(Z\), we identify
\(\{2,\ldots,2d-1\}\) with \(\{1,\ldots,2d-2\}\) in increasing order
and retain the same notation. With \(U^\perp,(U')^\perp\) computed in
this remaining index set, we have
\[
V^\perp=U^\perp\cup\{1\},\qquad
(V')^\perp=(U')^\perp\cup\{1\}.
\]

By Sylvester's determinant identity {(see e.g., \cite[Formula (2), pp 1572]{salem2008simple})} and \(y_{1,1}=1\), we obtain
\begin{flalign}\label{eq:minor-yz}
    [V:V'](Y)=[U:U'](Z),\qquad [V^\perp:(V')^\perp](Y)
=[U^\perp:(U')^\perp](Z).
\end{flalign}
Moreover, using the increasing reindexing of $U$ and $U'$, the sign terms remain the same, i.e., 
\[
\operatorname{sgn}(\sigma_V)
=\operatorname{sgn}(\sigma_U).
\]
The same equality holds for
\(V'\) and \(U'\). Then the relations in \(J_d^{\delta'}(T_0)\) imply
\[
[U:U'](Z)-\delta'\operatorname{sgn}(\sigma_U)
\operatorname{sgn}(\sigma_{U'})
[U^\perp:{U'}^\perp](Z)=0,
\qquad \text{for}~ |U|=|U'|=d-1,
\]
and
\[
[U:U'](Z)
+\delta'\operatorname{sgn}(\sigma_U)
\operatorname{sgn}(\sigma_{U'})\eta
[U^\perp:{U'}^\perp](Z)=0,
\qquad \text{for}~ |U|=|U'|=d.
\]
These are precisely the relations in \(J_{d-1}^{\delta'}(\eta)\).

Together with the preceding form equations and elimination formulas,
the calculations give closed immersions
\[
D_+(y_{1,1})
\hookrightarrow
\CX(d-1,\ell+1)\times_{\CO}\BA_{\CO}^{4d-2}
\]
in the alternating case, and
\[
D_+(y_{1,1})
\hookrightarrow
\CY^{\delta'}(d-1,\ell+1)
\times_{\CO}\BA_{\CO}^{4d-4}
\]
in the symmetric case, where
\[
T_{\ell+1}=x_{1,1},\qquad T_0'=\eta.
\]

The target rings are integral by Proposition~\ref{prop:integrality}.
Their dimensions are, respectively,
\[
d(2d+1)+\ell+1
\qquad\text{and}\qquad
d(2d-1)+\ell+1.
\]
By the property of blow-up, the chart $D_+(y_{1,1})$ is also integral
and has the same dimension as \(R\). Therefore, both closed immersions are isomorphisms. This proves the proposition.
}

\end{proof}


\subsection{Proof of Theorem~\ref{thm:affine}}
We first identify the strict transforms of the remaining blow-up centers on the matrix charts.
\begin{lem}\label{lem:strict-transforms}
Suppose $1\leq r<d$. Under the isomorphisms of Proposition~\ref{prop:matrix-charts},
the strict transform of $C_{r+1}(d,\ell)$ on each matrix chart
is identified with
\[
 \BA_{k}^{a_d}\times_{k}C_r(d-1,\ell+1),
\]
where $a_d=4d-2$ in the symplectic case and $a_d=4d-4$
in the orthogonal case.
\end{lem}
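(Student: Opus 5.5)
We work on the normalized chart $D_+(y_{1,1})$. By the symmetry reduction in the proof of Proposition~\ref{prop:matrix-charts}, the change of variables $X\mapsto P_aXP_b^t$ (resp.\ $S_aXS_b^t$) only reorders rows and columns, up to sign. It therefore preserves every determinantal ideal $\wedge^{r+1}X$ and the ideal $(T_0)$, so it suffices to treat $D_+(y_{1,1})$.

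The strict transform of $C_{r+1}(d,\ell)=V(T_0,\wedge^{r+1}X)$ is the closure of the preimage of $C_{r+1}(d,\ell)\setminus C_1(d,\ell)$. Equivalently, it is cut out by the saturation of the total transform with respect to the exceptional divisor $(x_{1,1})$. On $D_+(y_{1,1})$ we have $T_0=x_{1,1}\alpha=x_{1,1}^2\eta$ and $X=x_{1,1}Y$, so the ideal $(T_0,\wedge^{r+1}X)$ pulls back to $\bigl(x_{1,1}^2\eta,\;x_{1,1}^{r+1}\wedge^{r+1}Y\bigr)$.

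Next we identify the minors of $Y$. Since $y_{1,1}=1$, row and column operations clearing the first row and column turn $Y$ into $\operatorname{diag}(1,Y')$ with
\[
Y'=\begin{pmatrix} Z & \mathbf b^*-u\mathbf b\\ (\mathbf a^*)^t-v\mathbf a^t & c-uv\end{pmatrix}.
\]
Hence $\wedge^{r+1}Y=\wedge^rY'$. The elimination formulas for $\mathbf a^*,\mathbf b^*,c$ from the proof of Proposition~\ref{prop:matrix-charts} express the border of $Y'$ in terms of $Z$, $\mathbf a$, $\mathbf b$ and $\eta$. In the alternating case, for instance, $\mathbf b^*-u\mathbf b=CK\mathbf a$ and $c-uv=\eta-\mathbf a^tKC^tK\mathbf b$. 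Since $C=Z+\mathbf b\mathbf a^t$, further unimodular row and column operations, combined with the quadratic relations, should reduce $Y'$ to $\operatorname{diag}(Z,\eta)$ modulo terms in the ideal $(\eta)+\wedge^rZ$. This would give $(\eta,\wedge^rY')=(\eta,\wedge^rZ)$, i.e.\ $(\eta,\wedge^{r+1}Y)=(\eta,\wedge^rZ)$, which is the ideal of $\BA^{a_d}_k\times_k C_r(d-1,\ell+1)$ in the coordinates $T_{\ell+1}=x_{1,1}$, $T_0'=\eta$. The reduction of the border is the heart of the lemma. I expect to verify it directly from the identities $\mathbf b^*=u\mathbf b+CK\mathbf a$ and their orthogonal analogues.

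To conclude, let $W\coloneqq \BA^{a_d}_k\times_k C_r(d-1,\ell+1)$, viewed inside the chart. We show two things.
\begin{enumerate}
\item $W$ lies in the strict transform. Since $r\ge1$, the ideal $(\eta,\wedge^rZ)$ contains the saturated total transform, so the total transform contains $W$. By Proposition~\ref{prop-dimCd}, $W$ is integral, and its dimension exceeds that of $W\cap V(x_{1,1})$, because $T_{\ell+1}=x_{1,1}$ is a free coordinate in $C_r(d-1,\ell+1)\simeq C_r(d-1,0)\times\BA^{\ell+1}$. Hence $W$ is not contained in the exceptional divisor, and therefore lies in the strict transform.
\item The strict transform lies in $W$. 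Away from $x_{1,1}=0$, the total transform is $(\eta,\wedge^{r+1}Y)=(\eta,\wedge^rZ)$, which is $W$. Since $W$ is closed, the closure of this locus, namely the strict transform, is contained in $W$.
\end{enumerate}
Both are reduced and irreducible: $W$ by Proposition~\ref{prop-dimCd}, and the strict transform because it is the blow-up of the integral scheme $C_{r+1}(d,\ell)$ from the same proposition. They therefore coincide. As a consistency check on dimensions, $\dim C_{r+1}(d,\ell)=r(4d-2r+1)+\ell$ in the symplectic case, while $\dim W=a_d+(r-1)(4d-2r-1)+\ell+1=r(4d-2r+1)+\ell$; the orthogonal case is analogous.

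The main obstacle is this last equality of schemes, not merely of sets. If the saturation defines a non-reduced structure, the set-theoretic argument is insufficient. I would handle this by using the ideal identity $(\eta,\wedge^{r+1}Y)=(\eta,\wedge^rZ)$ established above, and invoking the integrality of $C_r(d-1,\ell+1)$ from Proposition~\ref{prop-dimCd}, so that the ideal in question is prime and hence equal to the ideal of the closure.
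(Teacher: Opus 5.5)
Your proposal is correct. The step you leave as ``expected'' does go through, and in fact exactly: nothing needs to be reduced modulo $(\eta)+\wedge^rZ$, and no further quadratic relations are used. Substituting $C=Z+\mathbf b\mathbf a^t$ into the elimination formulas, the Schur complement becomes
\[
Y'=\begin{pmatrix} Z & ZK\mathbf a\\ -\mathbf b^tKZ & \eta-\mathbf a^tKZ^tK\mathbf b\end{pmatrix}
\qquad\text{resp.}\qquad
Y'=\begin{pmatrix} Z & -ZK\mathbf a\\ -\mathbf b^tKZ & -\eta+\mathbf a^tKZ^tK\mathbf b\end{pmatrix}
\]
in the alternating (resp.\ symmetric) case. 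Now subtract (resp.\ add) the $Z$-columns times $K\mathbf a$ from (resp.\ to) the last column, and add $\mathbf b^tK$ times the $Z$-rows to the last row. Since $\mathbf b^tKZK\mathbf a=\mathbf a^tKZ^tK\mathbf b$, this yields exactly $\operatorname{diag}(Z,\pm\eta)$. Hence $\wedge^{r+1}Y=\wedge^rZ+\eta\,\wedge^{r-1}Z$ on the chart, and the saturation of the total transform with respect to $x_{1,1}$ is exactly the prime ideal $(\eta,\wedge^rZ)$.

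The paper never computes the border. It uses only the trivial inclusion $\wedge^rZ\subseteq\wedge^{r+1}Y$: by Sylvester's identity, each $r\times r$ minor of $Z$ is the $(r+1)\times(r+1)$ minor of $Y$ through row and column $1$. This gives one containment between the strict transform and $W$. It then finishes with integrality and the equality of dimensions from Proposition~\ref{prop-dimCd}, which are the very numbers in your consistency check.

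In your write-up, step (2) needs only this trivial inclusion. So step (2), your dimension count and integrality together already constitute the paper's proof, and the border computation is needed only for step (1). What the computation buys is an exact ideal-theoretic description of the strict transform on the chart. It also gives an explicit check that the strict transform meets the chart at all, which the dimension argument uses tacitly. Note that the Sylvester inclusion by itself gives the direction of your step (2). The direction the paper actually states, ``strict transform contains $W$'', is the one your border computation supplies.

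Finally, the worry in your last paragraph is moot. The strict transform is the blow-up of the integral scheme $C_{r+1}(d,\ell)$ along the proper closed subscheme $C_1(d,\ell)$, hence integral, as you had already noted.
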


{
\begin{proof}
    Similarly as in \eqref{eq:minor-yz}, the Sylvester's determinant identity implies that any $r\times r$ minor of $Z$ can be expressed by a $(r+1)\times (r+1)$ minor of $Y$. Hence, the strict transform of $C_{r+1}(d,\ell)=V(T_0,\wedge^{r+1}X)$ on the chart $D_+(y_{a,b})$ contains  $\BA_{k}^{a_d}\times_{k}C_r(d-1,\ell+1)$. Since they are integral of the same dimension by Proposition \ref{prop-dimCd}, the lemma follows. 
\end{proof}
}

We now prove Theorem~\ref{thm:affine} by induction on $d$.
The case $d=0$ follows from \eqref{eq:terminal-family}.
Suppose $d>0$. {By Lemma \ref{lem-coverscaler}, the first blow-up of $\CZ$ along $C_1(d,\ell)$ is covered by matrix charts $D_+(y_{a,b})$.} On each matrix chart, Proposition~\ref{prop:matrix-charts} and Lemma~\ref{lem:strict-transforms} identify the remaining blow-up sequence with the corresponding sequence for $(d-1,\ell+1)$, times an affine space. The induction hypothesis therefore gives a covering of $\CZ^{\reg}$ by charts of the form \eqref{eq:affine-resolution}, {and Theorem \ref{thm:affine} follows. }

\Addresses

\end{document}